\documentclass[12pt]{amsart}

\usepackage{latexsym, amssymb, amsmath,ulem,soul,esint}
\usepackage{tikz}
\usepackage{hyperref}
\usepackage{amsfonts, graphicx}
\usepackage{graphicx,color}
\newcommand{\be}{\begin{equation}}
\newcommand{\ee}{\end{equation}}
\newcommand{\beq}{\begin{eqnarray}}
\newcommand{\eeq}{\end{eqnarray}}

\usepackage{wasysym,stmaryrd}
\theoremstyle{definition}

\newtheorem{df}{Definition}
\newtheorem{thm}{Theorem}
\newtheorem{cl}{Corollary}

\newtheorem{lm}{Lemma}

\newtheorem{rmk}{Remark}

\usepackage{mathrsfs} 

\theoremstyle{remark}
\numberwithin{equation}{section}

\def\be{\begin{equation}}
\def\ee{\end{equation}}

\newcommand{\R}{\mathbb{R}}

\newcommand{\M}{\mathcal{M}^{n}}
\newcommand{\N}{\mathcal{N}}

\newcommand{\smf}{\mathcal{C}^{\infty}(\M)}

\DeclareMathOperator{\Ric}{Ric}
\DeclareMathOperator{\Hess}{Hess}
\DeclareMathOperator{\vol}{Vol}
\DeclareMathOperator{\BigO}{O}

\DeclareMathOperator{\Rm}{Rm}

\DeclareMathOperator{\scal}{scal}


\title[]
{Steady soliton with $\mathcal{L}^{1}$ decay curvature}

\author{Ming Hsiao}
\address[Ming Hsiao]{Department of Mathematics
National Taiwan University
and
National Center for Theoretical Sciences,
Math Division,
Taipei 10617,
Taiwan
}
\email{minghsiao@ncts.ntu.edu.tw}

\subjclass[2020]{53C25, 53E20}

\date{\today}
\begin{document}

\begin{abstract}
In this paper, we establish a compactness theorem for gradient Ricci solitons with scalar curvature bounds and uniform lower bounds of harmonic coordinates. Our approach is to bootstrap regularity in harmonic coordinates by exploiting the soliton equation. As an application, we show that the regular part of any noncollapsed limit of gradient Ricci solitons with bounded Ricci curvature is smooth. Further, we show that a steady gradient Ricci soliton is asymptotically cylindrical under an $\mathcal{L}^{1}$-decay assumption on its Ricci curvature.
\end{abstract}

\keywords{compactness theorem, Ricci soliton, harmonic coordinate}

\maketitle

\markboth{Ming Hsiao}{Steady soliton with $\mathcal{L}^{1}$ decay curvature}

\section{Introduction}

Given a Riemannian manifold $(\M,g)$ equipped with a smooth function $f\in\smf$, the triple $(\M,g,f)$ is called a \textit{gradient shrinking/steady/expanding Ricci soliton} if 
\begin{equation}
    \Ric_{g}+\Hess f=\lambda g,
\end{equation}
holds on $\M$ for $\lambda=\frac{1}{2},0,-\frac{1}{2}$, respectively. Since the introduction of the Ricci flow by Hamilton \cite{H82}, Ricci solitons have been studied extensively since it arises as a self-similar solution of Ricci flow and as a canonical model of singularity information. In particular, they play a crucial role in Perelman's resolution of the geometrization conjecture on three-manifolds \cite{H95, P02, B20, Ba20b}. From a Riemannian geometric viewpoint, the gradient Ricci soliton can be viewed as a generalization of an Einstein manifold, that is, the Bakry-Émery tensor $\Ric_{f}:=\Ric+\Hess(f)=\lambda g$ on $\M$.

\vskip0.2cm
Compactness theorems for Ricci solitons serve as a fundamental tool in understanding their geometric structure. In the shrinking case, Zhang established a smooth compactness theorem (with orbifold limit) for the family of shrinking solitons with uniform bounds on diameter, lower bound on $\Ric$ and volume, and an upper bound on energy. Subsequently, Haslhofer and Müller removed the first three conditions, replacing them with a uniform lower bound on Perelman's $\mu$-entropy \cite{Z06, HM11}. In a series of breakthrough works, Bamler introduces the novel notion, \textit{$\mathbb{F}$-convergence}, of the convergence of Ricci flow \cite{Ba20a, Ba20b, Ba23}. Using this framework, Li and Wang proved the compactness theorem of shrinking Ricci soliton with uniform entropy lower bound and analyzed the singular strata in the spirit of Cheeger-Colding theory \cite{LW24}. In contrast, for expanding and steady Ricci solitons, a general compactness theory remains largely undeveloped. Chen obtained a compactness theorem for gradient Ricci solitons based on Ricci curvature bounds and a lower bound of injectivity radius \cite{C20}. In the case of the asymptotically conical expander, Deruelle established a compactness theorem with nonnegative Ricci curvature and uniform decay of $\nabla^{k}\Rm$ \cite{D17}.

\vskip0.2cm

Our first result concerns a compactness theorem with a uniform lower bound of harmonic radius (see Definition \ref{df-harmonic radius}) and a uniform bound of scalar curvature. More precisely, we consider the following class:

\begin{df}
    Let $K,r_{0}:[0,\infty)\rightarrow(0,\infty)$ be two positive continuous functions. The class $\mathfrak{M}(n,K,A,r_{0},\lambda,\alpha,\Lambda,C_{0})$ consists the complete pointed gradient Ricci soliton $(\M,g,f,p)$, which satisfies $\Ric+\Hess(f)=\lambda g$,  with the following conditions:
\begin{equation}
    |{\scal}|(q)\leq K(d_{g}(p,q)),\ r_{H}(\Lambda,1,\alpha)(q)\geq r_{0}(d_{g}(p,q)),\text{ and }|f|(p)\leq A,
\end{equation}
for all $q\in\M$ and 
\begin{equation}\label{eq-soliton identity with scal and nabla f}
    \scal+|{\nabla f}|^{2}=2\lambda f+C_{0}.
\end{equation}
\end{df}

\vskip0.2cm
Note that the condition \eqref{eq-soliton identity with scal and nabla f} holds automatically for any gradient Ricci soliton. Indeed, we have the identity
\begin{equation}
    \nabla\left(\scal+|{\nabla f}|^{2}-2\lambda f\right)=0,
\end{equation}
see \cite{CH24} for instance. We prove that the class $\mathfrak{M}$ is compact under smooth topology.
\begin{thm}\label{thm-compactness of Ricci soliton}
    $\mathfrak{M}(n,K,A,r_{0},\lambda,\alpha,\Lambda,C_{0})$ is compact under smooth topology.
\end{thm}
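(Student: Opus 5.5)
The plan is to obtain uniform $C^{k,\beta}$ bounds on the metric and on $f$ in harmonic coordinates, at a scale controlled by $r_{0}(d_{g}(p,\cdot))$. We then apply the Cheeger--Gromov compactness argument and pass the soliton equation and the defining constraints of $\mathfrak{M}$ to the limit. The regularity is bootstrapped from the soliton equation and the identity \eqref{eq-soliton identity with scal and nabla f}. Throughout, fix a sequence $(\M_{i},g_{i},f_{i},p_{i})\in\mathfrak{M}(n,K,A,r_{0},\lambda,\alpha,\Lambda,C_{0})$ and a radius $R>0$. On $B(p_{i},R)$ we have $|\scal|\le \max_{[0,R]}K$ and harmonic radius at least $\min_{[0,R]}r_{0}$.

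\textbf{Step 1: $C^{0}$ and $C^{1}$ control of $f$.} By \eqref{eq-soliton identity with scal and nabla f}, $|\nabla f|^{2}=2\lambda f+C_{0}-\scal$.
\begin{itemize}
\item If $\lambda=0$, this gives $|\nabla f|\le \sqrt{|C_{0}|+K}$ directly.
\item If $\lambda\neq 0$, we get $|\nabla\sqrt{|2\lambda f+C_{0}|+c}|\le C$ for a suitable constant $c$, and integrate along minimizing geodesics from $p_{i}$.
\end{itemize}
Combined with $|f_{i}|(p_{i})\le A$, this yields uniform bounds on $|f_{i}|$ and $|\nabla f_{i}|$ on $B(p_{i},R)$.

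\textbf{Step 2: bootstrap in a harmonic chart.} Fix a harmonic chart $x$ on a ball of radius $r=\min_{[0,R]}r_{0}$, in which $\Lambda^{-1}\delta\le g_{ij}\le\Lambda\delta$ and $\|g_{ij}\|_{C^{1,\alpha}}\le\Lambda$. Since $\Delta x^{k}=0$, the Laplacian has no first-order terms, and the trace of the soliton equation reads
\begin{equation*}
g^{kl}\partial_{k}\partial_{l}f=n\lambda-\scal .
\end{equation*}
The right-hand side is in $L^{\infty}$ and the coefficients are $C^{1,\alpha}$, so $L^{p}$ theory gives $f\in W^{2,p}\subset C^{1,\beta}$ with uniform bounds on a smaller ball.

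Now the key point: by \eqref{eq-soliton identity with scal and nabla f}, $\scal=2\lambda f+C_{0}-g^{ij}\partial_{i}f\partial_{j}f$. Hence $\scal$ is always one derivative better than $\partial f$, and is already $C^{\beta}$. Schauder estimates then give $f\in C^{2,\beta}$, so $\Hess f=\partial^{2}f-\Gamma\,\partial f\in C^{\beta}$.

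In harmonic coordinates,
\begin{equation*}
-\tfrac12 g^{kl}\partial_{k}\partial_{l}g_{ij}+Q_{ij}(g,\partial g)=\lambda g_{ij}-(\Hess f)_{ij},
\end{equation*}
so Schauder estimates give $g\in C^{2,\beta}$. We then iterate:
\begin{itemize}
\item $g\in C^{k,\beta}$ and $f\in C^{k,\beta}$ give $\scal\in C^{k-1,\beta}$;
\item this gives $f\in C^{k+1,\beta}$, hence $\Hess f\in C^{k-1,\beta}$;
\item this gives $g\in C^{k+1,\beta}$.
\end{itemize}
Each iteration shrinks the ball by a fixed factor. The result is uniform $C^{k,\beta}$ bounds for all $k$, on balls of radius $c(n,\Lambda,\alpha,k)\,r$, depending only on the data.

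\textbf{Step 3: compactness and closedness.} The uniform lower bound on the $C^{k,\beta}$ harmonic radius on every $B(p_{i},R)$ lets us apply the standard Cheeger--Gromov argument (gluing charts via a center of mass construction). After passing to a subsequence, $(\M_{i},g_{i},p_{i})\to(\M_{\infty},g_{\infty},p_{\infty})$ smoothly, with $\M_{\infty}$ complete. Pulling back $f_{i}$ by the convergence diffeomorphisms, Step 2 and Arzelà--Ascoli give $f_{i}\to f_{\infty}$ in $C^{\infty}_{loc}$. The equations $\Ric+\Hess f=\lambda g$ and \eqref{eq-soliton identity with scal and nabla f} pass to the limit, as do $|\scal|\le K(d)$ and $|f|(p)\le A$, because distances converge.

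I expect the main obstacle to be preserving the harmonic radius bound $r_{H}(\Lambda,1,\alpha)\ge r_{0}$ in the limit. My first approach would use the smooth convergence: the harmonic charts of $g_{i}$ carry uniform $C^{2,\beta}$ bounds, so they subconverge to harmonic charts for $g_{\infty}$. The non-strict inequalities $\Lambda^{-1}\delta\le g\le\Lambda\delta$ and $\|g\|_{C^{1,\alpha}}\le\Lambda$ persist under $C^{1,\alpha}$ limits. This uses continuity of the harmonic radius under $C^{1,\alpha}$ convergence, in the spirit of Anderson. The other delicate point is the very first step of Step 2, going from $L^{\infty}$ control of $\scal$ to $C^{\beta}$ control, which is exactly where \eqref{eq-soliton identity with scal and nabla f} is essential.
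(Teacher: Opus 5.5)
Your proposal is correct and follows the paper's proof. Both bound $|f|$ and $|\nabla f|$ on $B(p,R)$ from \eqref{eq-soliton identity with scal and nabla f}, using a gradient bound on $\sqrt{2\lambda f+C}$ when $\lambda\neq0$, and both run the same harmonic-chart bootstrap: $f\in\mathcal{W}^{2,p}\subset\mathcal{C}^{1,\beta}$, then $\scal\in\mathcal{C}^{\beta}$ by the identity, then $f\in\mathcal{C}^{2,\beta}$, $\Ric\in\mathcal{C}^{\beta}$ and $g\in\mathcal{C}^{2,\beta}$. The only differences are at the end: the paper stops at a $\Rm$ bound, gets higher derivatives from local Shi estimates and an injectivity radius bound at $p$ from Cheeger--Gromov--Taylor, whereas you iterate the elliptic bootstrap to all orders, use harmonic-radius Cheeger--Gromov compactness, and also check that the limit stays in the class (in particular that $r_{H}(\Lambda,1,\alpha)\geq r_{0}$ persists), which the paper leaves implicit.
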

    When $K$ and $r_{0}$ are fixed constants, this result can be viewed as a generalization of \cite[Corollary 1.4]{C20}. Indeed, a two-sided Ricci curvature bound together with a uniform lower bound on the injectivity radius yields uniform $\mathcal{C}^{1,\alpha}_{H}$ bounds.

\vskip0.2cm
In the case of the steady soliton, we established the uniform curvature bound when $r_{0}$ is a constant.
\begin{cl}\label{cl-bounded curvature under uniform lower bound of r(1,alpha)}
    Let $(\M,g,f)$ be a complete gradient steady Ricci soliton. Suppose that $\inf_{\M} r_{H}(\Lambda,1,\alpha)>0$ for some $\Lambda>1$ and $\alpha\in(0,1)$. Then $\Rm$ is bounded on $\M$.
\end{cl}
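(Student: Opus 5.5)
Our plan is to argue by contradiction and apply Theorem \ref{thm-compactness of Ricci soliton} with constant functions $K$ and $r_{0}$. Set $r_{0}:=\inf_{\M} r_{H}(\Lambda,1,\alpha)>0$. For a steady soliton ($\lambda=0$) we use the identity $\scal+|\nabla f|^{2}=C_{0}$. Combined with Chen's result that $\scal\geq 0$ on a complete steady gradient soliton, this gives $0\leq \scal\leq C_{0}$ and $|\nabla f|^{2}\leq C_{0}$ on all of $\M$. So the scalar curvature is uniformly bounded by the constant $K:=C_{0}$ (if $C_{0}=0$ the soliton is Ricci flat and we use $K=1$). The harmonic radius is bounded below by the constant $r_{0}$ at every point.

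Suppose $\Rm$ is unbounded, and pick $q_{i}\in\M$ with $|\Rm|(q_{i})\to\infty$. We consider the pointed solitons $(\M,g,f_{i},q_{i})$ with $f_{i}:=f-f(q_{i})$. Subtracting a constant from $f$ does not change $\Hess f$ or $\nabla f$, so each $f_{i}$ still satisfies $\Ric+\Hess f_{i}=0$ and $\scal+|\nabla f_{i}|^{2}=C_{0}$, which is exactly condition \eqref{eq-soliton identity with scal and nabla f} with $\lambda=0$. We also have $|f_{i}|(q_{i})=0\leq A:=1$. Hence every $(\M,g,f_{i},q_{i})$ lies in the class $\mathfrak{M}(n,K,1,r_{0},0,\alpha,\Lambda,C_{0})$.

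By Theorem \ref{thm-compactness of Ricci soliton}, a subsequence converges in the pointed smooth topology to a limit soliton $(\M_{\infty},g_{\infty},f_{\infty},q_{\infty})$. Smooth convergence means that, on the fixed ball $B(q_{i},1)$, the pulled-back metrics converge in $C^{k}$ for every $k$. In particular $|\Rm_{g}|(q_{i})\to|\Rm_{g_{\infty}}|(q_{\infty})<\infty$, which contradicts $|\Rm|(q_{i})\to\infty$. Therefore $\Rm$ is bounded on $\M$.

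The only real input beyond the compactness theorem is the uniform bound on $\scal$. The upper bound is immediate from $|\nabla f|^{2}\geq0$. The lower bound is where we need the nonnegativity of $\scal$ for steady gradient solitons (B.-L. Chen). Alternatively, the lower bound on harmonic radius already bounds $\scal$ below via the $C^{1,\alpha}$ control, but Chen's theorem is cleaner.

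A second point to check is that the compactness theorem really yields $C^{2}$ (indeed $C^{\infty}$) control on a ball of fixed size around the basepoint, uniformly in $i$. This is precisely its content: it bootstraps regularity in harmonic coordinates of definite size using the soliton equation. So the argument could even be phrased directly: uniform $C^{k}$ bounds on harmonic charts of radius $\geq r_{0}/2$ yield a uniform bound on $|\Rm|$.
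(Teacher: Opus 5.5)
Your proof is correct and is essentially the paper's argument: re-center $f$ at each point, use Chen's $\scal\geq 0$ together with $\scal+|\nabla f|^{2}=C_{0}$ to place every $(\M,g,f-f(q),q)$ in one fixed class $\mathfrak{M}$ with constant $K$ and $r_{0}$, and invoke the uniform curvature estimate. The only difference is that the paper applies Lemma \ref{lm-uniform curvature bound in class m} directly rather than arguing by contradiction through Theorem \ref{thm-compactness of Ricci soliton}, which is itself derived from that lemma. Your parenthetical alternative, however, is wrong: a $\mathcal{C}^{1,\alpha}$ harmonic-radius bound gives no pointwise control of $\scal$ (that is the whole point of the bootstrap), so Chen's theorem really is needed for the lower bound.
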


Another application of the study on harmonic coordinate is to reinforce the regularity of the non-collapsing two-sided Ricci limit space of gradient Ricci solitons, which is motivated by the case of Einstein manifolds \cite{CN15}.
\begin{cl}\label{cl-GH limit of gradient Ricci soliton}
    Let $(\M_{i},g_{i},f_{i},p_{i})$ be a sequence of pointed complete gradient Ricci solitons and  $\lambda,v,A>0$ be the constants. Assume
    \begin{enumerate}
        \item $\Ric_{i}+\Hess f_{i}=\lambda_{i}g_{i}$ with constant $|\lambda_{i}|\leq\lambda$ for all $i$.
        \item $|{\Ric_{i}}|\leq1$. for all $i$.
        \item $\vol_{i}(p_{i},1)\geq v$ for all $i$.
        \item $|f_{i}|(p_{i})+|{\nabla f_{i}}|(p_{i})\leq A$ for all $i$.
    \end{enumerate}
    Then, after passing to a subsequence, there is a complete pointed length space $(X_{\infty},d_{\infty},p_{\infty})$ and a locally Lipschitz function $f_{\infty}:X_{\infty}\rightarrow\R$ such that
    \begin{equation}
        (\M_{i},d_{g_{i}},f_{i},p_{i})\overset{\text{pGH}}{\longrightarrow}(X_{\infty},d_{\infty},f_{\infty},p_{\infty}),
    \end{equation}
    as $i\rightarrow\infty$. Moreover, there is a decomposition $X_{\infty}=\mathcal{R}\sqcup\mathcal{S}$ with the following:
    \begin{enumerate}
        \item There is a smooth Riemannian $n$-manifold structure $(\mathcal{R},g_{\infty},f_{\infty}|_{\mathcal{R}})$ such that $(\mathcal{R},d_{g_{\infty}})$ and $(\mathcal{R},d_{\infty}|_{\mathcal{R}})$ are local isometry and there exists a constant $\lambda_{\infty}$ with $|{\lambda_{\infty}}|\leq\lambda$ such that 
        \begin{equation}
            \Ric_{g_{\infty}}+\Hess_{g_{\infty}}f_{\infty}=\lambda_{\infty}g_{\infty},
        \end{equation}
        on $\mathcal{R}$.
        \item $\dim_{\mathcal{H}}\mathcal{\mathcal{S}}\leq n-4$.
    \end{enumerate}
\end{cl}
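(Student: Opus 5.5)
We will follow the Cheeger--Naber strategy for Einstein manifolds \cite{CN15}. The regularity input is the bootstrap in harmonic coordinates behind Theorem \ref{thm-compactness of Ricci soliton}, which turns $\mathcal{C}^{1,\alpha}$ harmonic control into smooth control for solitons.

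\textbf{Step 1: potential bounds and Gromov--Hausdorff limit.} Since $|\Ric_i|\le 1$, trace gives $|\scal_i|\le n$. The identity $\scal_i+|\nabla f_i|^2=2\lambda_i f_i+C_i$, evaluated at $p_i$ with hypothesis (4), shows that $|C_i|$ is bounded. Hence $|\nabla f_i|^2\le 2\lambda|f_i|+C$, so $\sqrt{|f_i|+c}$ is Lipschitz. Therefore $|f_i|$ and $|\nabla f_i|$ are bounded on each ball $B(p_i,R)$, uniformly in $i$. This gives $|\Hess f_i|\le|\lambda_i|+1$ and in particular $\Ric_i\ge -1$. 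Gromov's precompactness then yields a pointed GH limit $(X_\infty,d_\infty,p_\infty)$. Arzel\`a--Ascoli, using the local Lipschitz bound on $f_i$, gives a locally Lipschitz limit $f_\infty$. After passing to a subsequence we may also assume $\lambda_i\to\lambda_\infty$ with $|\lambda_\infty|\le\lambda$.

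\textbf{Step 2: regular/singular decomposition.} Volume is noncollapsed at all points by hypothesis (3) and Bishop--Gromov, and the Ricci curvature is two-sided bounded. Anderson's and Cheeger--Colding's theory then gives the following: $\mathcal{R}$, the set of points where some tangent cone is $\R^n$, is open, and near each point of $\mathcal{R}$ the harmonic radius $r_H(\Lambda,1,\alpha)$ is uniformly bounded below along the sequence. This uses the $\epsilon$-regularity theorem: volume ratio close to Euclidean together with $|\Ric|\le 1$ implies $\mathcal{C}^{1,\alpha}$ harmonic coordinates. Consequently $\mathcal{R}$ carries a $\mathcal{C}^{1,\alpha}$ Riemannian structure, locally isometric to $d_\infty$. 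We then apply the local version of Theorem \ref{thm-compactness of Ricci soliton}, with the uniform scalar curvature bound and the harmonic radius bound on compact subsets of $\mathcal{R}$. The soliton equation in harmonic coordinates reads $-\tfrac12\Delta g_{ij}+Q(g,\partial g)+\partial_i\partial_j f-\Gamma\partial f=\lambda_i g_{ij}$, and the trace equation $\Delta f=n\lambda_i-\scal$ is coupled with it. Bootstrapping these gives $\mathcal{C}^{k}$ bounds on $(g_i,f_i)$ for all $k$. So the convergence on $\mathcal{R}$ is smooth, and the limit satisfies $\Ric+\Hess f_\infty=\lambda_\infty g_\infty$. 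The delicate point is the initial step of this bootstrap: we only know $g\in\mathcal{C}^{1,\alpha}$ and $f\in\mathcal{C}^{1}$. We first get $f\in W^{2,p}$ from $\Delta f=n\lambda-\scal$ with $\scal$ bounded. Then we use $\Delta\scal=\langle\nabla f,\nabla\scal\rangle-2|\Ric|^2+2\lambda\scal$, read weakly, to raise the regularity of $\scal$ and $f$ alternately with that of $g$.

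\textbf{Step 3: codimension four.} Cheeger--Naber \cite{CN15} establish $\dim_{\mathcal H}\mathcal{S}\le n-4$ for noncollapsed limits with bounded Ricci curvature, with no Einstein assumption. Their codimension-two and codimension-four results (the latter via the absence of codimension-two and codimension-three singularities) hold for $|\Ric|\le 1$ and a volume lower bound. So we only need to check that their singular set coincides with our $\mathcal{S}$, which holds by the definition of $\mathcal{R}$ in Step 2.

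I expect the main obstacle to be the first step of the bootstrap in Step 2, and making sure the $\epsilon$-regularity supplies harmonic radius bounds of exactly the form needed to invoke Theorem \ref{thm-compactness of Ricci soliton} locally.
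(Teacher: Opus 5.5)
Your proposal is correct and follows essentially the paper's route. It uses Cheeger--Naber and Anderson for the limit space, the $\mathcal{C}^{1,\alpha}$ structure on $\mathcal{R}$ and $\dim_{\mathcal{H}}\mathcal{S}\le n-4$. It bounds $C_i$ at $p_i$ and applies Arzel\`a--Ascoli to get $f_\infty$. It combines Colding's volume convergence with $\varepsilon$-regularity to get $r_H(\Lambda,1,\alpha)(q_i)\ge r_0$ near regular points, and then runs the bootstrap from the proof of Lemma~\ref{lm-uniform curvature bound in class m}.

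The one difference is the first bootstrap step. The paper gets $\scal\in\mathcal{C}^{\alpha}$ directly from $f\in\mathcal{C}^{1,\alpha}$ via the first-order identity $\scal=2\lambda f+C-|\nabla f|^{2}$, which you already wrote down in Step 1. Your route through $\Delta\scal-\langle\nabla f,\nabla\scal\rangle=2\lambda\scal-2|\Ric|^{2}$ also works here, because $|\Ric_i|\le 1$ bounds the right-hand side. It would not suffice under only the scalar curvature bound of Theorem~\ref{thm-compactness of Ricci soliton}.
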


\vskip0.2cm
Curvature estimates for steady Ricci solitons remain a central open problem in the analysis of Ricci solitons. A folklore conjecture asserts complete gradient steady Ricci soliton has bounded curvature. In \cite{CH25}, we confirm this conjecture in the case that $|{\Ric}|=\BigO(r^{-1-\epsilon})$ and proper potential function $f$; and $\Rm$ growth at most polynomially when $|{\Ric}|=\BigO(r^{-1})$. If one only assumes that $\Ric$ is bounded and nonnegative, Wu proved that $\Rm$ grows at most exponentially. 

\vskip0.2cm
An application of Corollary \ref{cl-bounded curvature under uniform lower bound of r(1,alpha)} is the asymptotic structure of steady Ricci solitons with $\mathcal{L}^{1}$-decay $\Ric$. Given a steady soliton $(\M,g,f)$, we make the following assumptions:
\begin{itemize}
\makeatletter
    \item [\textbf{(A0)}]\def\@currentlabel{\textbf{(A0)}}\label{Ass-0}$(\M,g)$ is non-Ricci flat open manifold and $f$ is a proper and negative function such that
    \begin{equation}
        \scal+|{\nabla f}|^{2}=1.
    \end{equation}
\makeatother
\makeatletter
    \item [\textbf{(A1)}]\def\@currentlabel{\textbf{(A1)}}\label{Ass-1}$|{\Ric}|(x)=\BigO(h(r(x)))$ as $r(x):=d_{g}(p,x)\rightarrow\infty$ for some $h(r)\in\mathcal{L}^{1}([1,\infty))$ and $h(r)$ is non-increasing for $r\gg1$.
\makeatother
\end{itemize}
\vskip 0.2cm

 For steady soliton with \ref{Ass-0} and \ref{Ass-1}, we introduce the following notion. Denote $F:=-f$, the level set $\Sigma_{R}=\{F=R\}$, and the flow $\frac{d}{dt}\varphi_{t}=\frac{\nabla F}{|{\nabla F}|^{2}}$ on the region $\{F\geq R_{0}\}\subset\{|{\nabla F}|>\frac{1}{2}\}$. Define the map $\Psi:\{F\geq R_{0}\}\rightarrow\Sigma_{R_{0}}\times(R_{0},\infty)$ by $\Psi(x):=(\varphi_{R_{0}-F(x)}(x),F(x))$. The following theorem indicates the asymptotic geometry of $\M$ is $\R\times\N$ for some compact Ricci flat manifold $\N$.
 
\begin{thm}\label{thm-bounded Rm with L1 condition}
    Let $(\M,g,f)$ be a gradient steady Ricci soliton. Suppose $(\M,g,f)$ satisfying \ref{Ass-0} and \ref{Ass-1}. Then $\Rm$ is bounded. Furthermore, there exists closed Ricci flat manifold $(\N^{n-1}=\Sigma_{R_{0}},g_{\infty})$ such that $(\M,g,p_{k})$ converge \textit{smoothly} to $(\R\times\N,\overline{g_{\infty}}:=dr^{2}+g_{\infty},\bar{o})$ whenever $\varphi_{R_{0}-F(p_{k})}(p_{k})$ converge to $o\in\N$. Moreover, for any $k\in\mathbb{N}_{0}$, there is a constant $C_{k}>0$ such that 
    \begin{equation}
        \|(\Psi^{-1})^{*}g-\overline{g_{\infty}}\|_{\mathcal{C}^{k}(\overline{g_{\infty}})}\leq C_{k}\int_{r(x)}^{\infty}h(s)ds,
    \end{equation}
    for all $r(x)\gg1$.
\end{thm}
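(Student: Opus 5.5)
Our plan is to reduce the curvature bound to Corollary \ref{cl-bounded curvature under uniform lower bound of r(1,alpha)} by showing that, far out, the metric in the coordinates $\Psi$ is uniformly close to a fixed product metric. The geometric input is that for a steady soliton $\partial_t g=-2\Ric$ along the gradient flow. Concretely, for the flow $\varphi_t$ generated by $\nabla F/|\nabla F|^2$, we have
\begin{equation*}
\mathcal{L}_{\nabla F/|\nabla F|^{2}}g=\frac{2\Hess F}{|\nabla F|^{2}}+\text{(terms involving } d|\nabla F|^{-2}),
\end{equation*}
and $\Hess F=\Ric$. The correction term involves $\nabla|\nabla F|^{2}=-\nabla\scal=2\Ric(\nabla F)$, so it is also $\BigO(|\Ric|)$. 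Using \ref{Ass-0} together with $\scal\to0$, which follows from \ref{Ass-1}, we get $|\nabla F|\to1$. Hence $F$ is comparable to $r$, and on $\{F\geq R_0\}$ the level sets $\Sigma_R$ are compact hypersurfaces, since $F$ is proper, and they are all diffeomorphic to $\Sigma_{R_0}$.

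\textbf{Step 1 ($\mathcal{C}^0$ convergence).} We write $(\Psi^{-1})^*g=|\nabla F|^{-2}dR^2+g_R$, where $g_R$ is a family of metrics on $\Sigma_{R_0}$. Since $\partial_R$ is orthogonal to the level sets, there are no cross terms. We then have $|\partial_R g_R|\leq C h(R)$ and $\big||\nabla F|^{-2}-1\big|\leq C\int_R^\infty h$, the latter obtained by integrating $\partial_R|\nabla F|^2=\BigO(h)$. Because $h\in\mathcal{L}^1$ and is monotone, $g_R$ converges uniformly to a limit metric $g_\infty$ (continuous a priori), with $|g_R-g_\infty|\leq C\int_R^\infty h$. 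In particular $(\Psi^{-1})^*g$ is uniformly bi-Lipschitz to a fixed product $dR^2+g_{R_0}$.

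\textbf{Step 2 (harmonic radius lower bound and bounded $\Rm$).} This is the main obstacle. We must show $\inf r_H(\Lambda,1,\alpha)>0$ knowing only a $\mathcal{C}^0$-closeness to a fixed smooth product and $|\Ric|\to0$. We would first try to build harmonic coordinates directly. Around each point we take the coordinate charts of the product $\Sigma_{R_0}\times\R$ pulled back by $\Psi$, in which $g$ is $\mathcal{C}^0$-close to a fixed smooth metric, and we solve $\Delta_g u=0$ with the chart functions as boundary data. A bounded Ricci tensor together with a $\mathcal{C}^0$-comparable (hence volume noncollapsed and injectivity-controlled) structure gives $\mathcal{C}^{1,\alpha}$ harmonic radius bounds in the manner of Anderson. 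To make this rigorous we would argue by contradiction. We take points with $r_H\to0$ and rescale so that $r_H=1$. The rescaled metrics have $|\Ric|\to0$ and remain $\mathcal{C}^0$-close to flat space in these charts, so the limit is a Ricci-flat space which is $\mathcal{C}^0$, hence bi-Lipschitz, flat. Ricci-flat harmonic coordinates bootstrap to smoothness, so the limit is $\R^n$. This contradicts the normalization $r_H=1$, using the continuity of the harmonic radius under $\mathcal{C}^{1,\alpha}$ convergence. With the positive lower bound in hand, Corollary \ref{cl-bounded curvature under uniform lower bound of r(1,alpha)} gives that $\Rm$ is bounded.

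\textbf{Step 3 (smooth convergence and higher-order decay).} With $\Rm$ bounded and $\Ric$ decaying, Theorem \ref{thm-compactness of Ricci soliton} applies to the pointed sequence $(\M,g,f-f(p_k),p_k)$. It extracts smooth limits, which are steady solitons with $\Ric\equiv0$. By Step 1 such a limit is $\R\times\N$ with product metric $dr^2+g_\infty$, so $g_\infty$ is smooth and Ricci flat. Uniqueness of the limit follows from Step 1, so the whole sequence converges. For the $\mathcal{C}^k$ estimate we would use Shi-type local derivative estimates, available for the soliton via the associated Ricci flow with bounded curvature, or elliptically via the harmonic-coordinate bootstrap of Theorem \ref{thm-compactness of Ricci soliton}. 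These give $|\nabla^k\Ric|\leq C_k h(r-1)$, using interior estimates for the elliptic system $\Delta\Ric=-2\Rm*\Ric+\nabla_{\nabla f}\Ric$. Differentiating the identity $\partial_R g_R=\BigO(\Ric)$ tangentially $k$ times then bounds $|\partial_R\nabla^k g_R|\leq C_k h(R)$, with the connection controlled by bounded geometry. Integrating from $R$ to $\infty$ yields the stated bound $C_k\int_{r(x)}^\infty h(s)\,ds$.
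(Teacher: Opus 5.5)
Your proposal is correct in outline. Steps 1 and 3 track the paper, but Step 2 reaches $\inf_{\M} r_H(\Lambda,1,\alpha)>0$ by a different route.

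\textbf{The paper's route.} It converts the uniform $\mathcal{C}^0$ convergence $g_R\to g_\infty$ into an almost-Euclidean volume ratio $\vol_g(x,r)\geq(1-\epsilon)\omega_n r^n$ for all $x$ and all $r<r_\epsilon$. It does this by comparing every slab $\Sigma_{s,s+4}$ with one fixed smooth slab. It then cites Anderson's $\epsilon$-regularity under bounded $\Ric$.

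\textbf{Your route.} You run the blow-up contradiction yourself and identify the blow-up limit as $\R^n$ directly from the $\mathcal{C}^0$ structure. In effect you reprove Anderson's theorem, with ``$\mathcal{C}^0$-close to flat at small scales'' replacing the volume hypothesis. This saves the volume bookkeeping and the citation, but two points must be made explicit.

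First, you need a point-picking step. It is easy, since $r_H$ is $1$-Lipschitz and positive on compact sets. It ensures the rescaled metrics have $r_H\geq\frac12$ on balls of radius tending to infinity. Without it there is no $\mathcal{C}^{1,\alpha'}$ subsequential limit, so ``the limit is a Ricci-flat space'' and the appeal to continuity of the harmonic radius are unjustified.

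Second, closeness to flat after rescaling does not come from $g$ being $\mathcal{C}^0$-close to a fixed smooth metric. Relative to the smooth $dR^2+g_{R_0}$ you only have bi-Lipschitz control, which would give only a Ricci-flat limit bi-Lipschitz to $\R^n$. The closeness comes from two facts together:
\begin{itemize}
\item $g_R\to g_\infty$ uniformly;
\item the merely continuous $g_\infty$ is uniformly continuous on the compact $\Sigma_{R_0}$.
\end{itemize}
Hence on coordinate balls of radius tending to $0$, around points going to infinity, the coefficients are uniformly close to constants. The parenthetical ``injectivity-controlled'' in your first attempt is not justified, but you do not rely on it.

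\textbf{Step 3.} Your Gr\"onwall argument for $\partial_R\nabla^k_{\bar g}g_R$ is a cleaner packaging of the paper's iterated connection-difference induction. Like the paper's argument, it actually yields $\int_{r(x)-C}^\infty h$ rather than $\int_{r(x)}^\infty h$. The shift comes from $h(r-1)$ and from $F\geq r-C$.
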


\vskip 0.2cm 
A particularly important case arises when the Ricci curvature decays exponentially. Most known examples, such as \textit{the product of the cigar soliton and a closed Ricci flat manifold} or \textit{$\R\times \text{closed Ricci flat}$}, exhibit exponential decay of the Ricci curvature. As a corollary of Theorem \ref{thm-bounded Rm with L1 condition}, we establish the optimal convergence rate of steady Ricci solitons with exponential decay of the Ricci curvature.
\begin{cl}\label{cl-curvature estimate with h(r)=exp(-r)}
     Let $(\M,g,f)$ be a gradient steady Ricci soliton. Suppose $(\M,g,f)$ satisfying \ref{Ass-0} and \ref{Ass-1} with $h(r)=e^{-\kappa r}$ for some constant $\kappa>0$.  Then there exists closed Ricci flat manifold $(\N^{n-1}=\Sigma_{R_{0}},g_{\infty})$ such that for any $k\in\mathbb{N}_{0}$, there is a constant $C_{k}>0$ such that 
     \begin{equation}
         \|\Psi^{*}\overline{g_{\infty}}-g\|_{\mathcal{C}^{k}(g)}\leq C_{k}e^{-\kappa r(x)},
     \end{equation}
     for all $r(x)\gg1$.
\end{cl}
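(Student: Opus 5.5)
The corollary follows from Theorem \ref{thm-bounded Rm with L1 condition} applied with $h(r)=e^{-\kappa r}$. The steps are: check the hypotheses, compute the tail integral, and then move the estimate from the model metric $\overline{g_{\infty}}$ on $\Sigma_{R_{0}}\times(R_{0},\infty)$ back to $g$ on $\M$ via $\Psi$.

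\textbf{Hypotheses and the tail integral.} The function $h(r)=e^{-\kappa r}$ is non-increasing and lies in $\mathcal{L}^{1}([1,\infty))$, so \ref{Ass-1} holds and Theorem \ref{thm-bounded Rm with L1 condition} applies. It produces the closed Ricci flat manifold $(\N^{n-1}=\Sigma_{R_{0}},g_{\infty})$ and, for each $k$, the bound
\begin{equation*}
\|(\Psi^{-1})^{*}g-\overline{g_{\infty}}\|_{\mathcal{C}^{k}(\overline{g_{\infty}})}\leq C_{k}\int_{r(x)}^{\infty}e^{-\kappa s}ds=\frac{C_{k}}{\kappa}e^{-\kappa r(x)}.
\end{equation*}

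\textbf{Transfer by pullback.} Pulling this inequality back by the diffeomorphism $\Psi$ gives
\begin{equation*}
\Psi^{*}\bigl((\Psi^{-1})^{*}g-\overline{g_{\infty}}\bigr)=g-\Psi^{*}\overline{g_{\infty}}.
\end{equation*}
Since $\Psi$ is an isometry from $(\{F\geq R_{0}\},\Psi^{*}\overline{g_{\infty}})$ onto the model, the $\mathcal{C}^{k}(\overline{g_{\infty}})$ norm of the left-hand tensor equals the $\mathcal{C}^{k}(\Psi^{*}\overline{g_{\infty}})$ norm of $T:=g-\Psi^{*}\overline{g_{\infty}}$. Two points must be checked here.

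First, the argument of the integral must match. The theorem's estimate is evaluated at the point $\Psi(x)$, with $r(x)$ referring to the original point, so no adjustment is needed beyond reading the theorem pointwise on $\{F\geq R_{0}\}$.

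Second, the norm must be changed from $\mathcal{C}^{k}(\Psi^{*}\overline{g_{\infty}})$ to $\mathcal{C}^{k}(g)$. This is the main technical step. Write $\bar g:=\Psi^{*}\overline{g_{\infty}}$; it has bounded geometry, and by the $k=0$ estimate $g$ and $\bar g$ are uniformly equivalent for $r\gg1$. The Levi-Civita connections differ by the tensor
\begin{equation*}
\nabla^{g}-\nabla^{\bar g}=\bar g^{-1}*\nabla^{\bar g}T.
\end{equation*}
This difference is bounded by $Ce^{-\kappa r}$ in $\mathcal{C}^{k-1}(\bar g)$, using the $\mathcal{C}^{k}$ estimate for $T$ together with bounds on $g^{-1}$. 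One then converts $\nabla^{g,j}T$ into $\nabla^{\bar g,j}T$ plus terms that are polynomial in lower $\bar g$-derivatives of $T$ and of the connection difference. Each such term carries at least one factor of size $e^{-\kappa r}$, and all remaining factors are uniformly bounded. Inducting on $j\leq k$ gives
\begin{equation*}
|\nabla^{g,j}T|_{g}\leq C_{j}'e^{-\kappa r}.
\end{equation*}
The uniform boundedness used here comes from the boundedness of $\Rm$ and the smooth convergence in Theorem \ref{thm-bounded Rm with L1 condition}.

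\textbf{Remaining region.} For large $r(x)$ the point $x$ lies in $\{F\geq R_{0}\}$, because $f$ is proper and $|\nabla F|\to1$ (indeed $F\sim r$). So the estimate is only needed on that region, and adjusting constants gives
\begin{equation*}
\|\Psi^{*}\overline{g_{\infty}}-g\|_{\mathcal{C}^{k}(g)}\leq C_{k}e^{-\kappa r(x)}.
\end{equation*}

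The only nontrivial obstacle is the norm-switching lemma in the transfer step. If more care is needed there, I would prove it as a separate lemma: for two metrics that are $\mathcal{C}^{k}$-close with respect to one of bounded geometry, the $\mathcal{C}^{k}$ norms of their difference are comparable.
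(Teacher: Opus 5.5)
Your proposal is correct and follows the paper's route. The paper's proof is the single line that the corollary is a direct consequence of Theorem~\ref{thm-bounded Rm with L1 condition}; your computation $\int_{r}^{\infty}e^{-\kappa s}\,ds=\kappa^{-1}e^{-\kappa r}$, the pullback by $\Psi$, and the switch from the $\mathcal{C}^{k}(\overline{g_{\infty}})$ norm to the $\mathcal{C}^{k}(g)$ norm make explicit what the paper leaves implicit, with one harmless slip: the connection difference is $g^{-1}*\nabla^{\bar g}T$ rather than $\bar g^{-1}*\nabla^{\bar g}T$, which changes nothing since you bound $g^{-1}$ via the uniform equivalence from the $k=0$ estimate.
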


\vskip0.2cm
Combining this with known curvature estimates for steady Ricci solitons \cite{MSW19,C19}, we derive the following curvature estimate under a stronger decay assumption on $h$, namely that the integral of $h$ satisfies a Dini-type condition.
\begin{cl}\label{cl-curvature estimate with Dini H}
    Let $(\M,g,f)$ be a gradient steady Ricci soliton. Suppose $(\M,g,f)$ satisfying \ref{Ass-0} and \ref{Ass-1} with $\frac{H(r)}{r}:=\int_{r}^{\infty}h(t)dt\in\mathcal{L}^{1}([1,\infty))$ and $\lim_{r\rightarrow\infty}H(r)=0$. Suppose that $\lim_{r\rightarrow\infty}|{\Rm}|=0$. Then there exists closed flat manifold $(\N^{n-1}=\Sigma_{R_{0}},g_{\infty})$ such that for any $k\in\mathbb{N}_{0}$, there is a constant $C_{k}>0$ such that 
     \begin{equation}
         \|\Psi^{*}\overline{g_{\infty}}-g\|_{\mathcal{C}^{k}(g)}\leq C_{k}e^{-r(x)},
     \end{equation}
     for all $r(x)\gg1$. Moreover, $|{\Rm}|=\BigO(e^{-r(x)})$ holds as $r(x)\rightarrow\infty$.  
\end{cl}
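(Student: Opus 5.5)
We will deduce Corollary \ref{cl-curvature estimate with Dini H} from Theorem \ref{thm-bounded Rm with L1 condition} combined with the exponential decay estimates for steady solitons with curvature tending to zero \cite{MSW19,C19}. The plan is to first obtain $|\Rm|=\BigO(e^{-r})$, then feed $h(r)=e^{-r}$ into Corollary \ref{cl-curvature estimate with h(r)=exp(-r)}.

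\emph{Step 1: structure of the limit.} The Dini-type hypothesis gives $\int_r^\infty h<\infty$, and $h$ is non-increasing, so Theorem \ref{thm-bounded Rm with L1 condition} applies. Hence $\Rm$ is bounded, and the pointed limits along sequences $p_k\to\infty$ are $\R\times\N$ with $\N=\Sigma_{R_0}$ closed and Ricci flat. Since $|\Rm|\to0$ at infinity and the convergence is smooth, the limit $\R\times\N$ is flat, so $\N$ is a closed flat manifold. Moreover $\Sigma_{R}$ is diffeomorphic to $\N$ for every $R\geq R_0$ through the flow $\varphi_t$.

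\emph{Step 2: polynomial-rate control near the flat model.} We use the quantitative estimate of Theorem \ref{thm-bounded Rm with L1 condition}. It gives that, in $C^k$, the metric differs from $dr^2+g_\infty$ by $\BigO(\int_r^\infty h)=\BigO(H(r)/r)$. In particular, the second fundamental forms of the level sets and $\Rm$ are controlled by $H(r)/r\to0$. The Dini condition $H(r)/r\in\mathcal{L}^1$ is what lets us integrate these errors along the flow of $\nabla F/|\nabla F|^2$. Doing so shows that the level-set metrics $\varphi^*g|_{\Sigma_R}$ converge with summable error, and that the injectivity radius and volume of the level sets stay uniformly bounded below. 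The manifold therefore has an end that is uniformly noncollapsed and asymptotically flat-cylindrical with $|\Rm|\to0$, and $\Delta_f$ has the structure needed in \cite{MSW19,C19}.

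\emph{Step 3: exponential decay.} In that setting, the curvature estimates of \cite{MSW19,C19} for steady solitons with $|\Rm|\to0$ (and $\scal\to0$, $|\nabla f|\to1$ by \ref{Ass-0}) give exponential decay. We would prove this by a maximum-principle argument for $\Delta_f\Rm=\Rm*\Rm$ with barrier $e^{f}=e^{-F}$. Because $F=r+o(r)$, and in fact $F-r$ is bounded thanks to the Dini condition (as $|\nabla F|-1=\BigO(\scal)$ is integrable), we get $|\Ric|\le|\Rm|=\BigO(e^{-r})$. So \ref{Ass-1} holds with $h=e^{-r}$, i.e.\ $\kappa=1$, and Corollary \ref{cl-curvature estimate with h(r)=exp(-r)} yields the $\mathcal{C}^k$ estimate $\|\Psi^*\overline{g_\infty}-g\|_{\mathcal{C}^k(g)}\le C_ke^{-r}$.

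\emph{Main obstacle.} The hardest point is verifying the hypotheses of \cite{MSW19,C19}. Those results need the exponential rate of the barrier to match $r$ exactly, which requires $F-r=\BigO(1)$. We also need the noncollapsing and injectivity-radius control on the end, which is where $H(r)/r\in\mathcal{L}^1$ and $H\to0$ must be used; only the weaker bound $F=r+o(r)$ would give $e^{-(1-\epsilon)r}$. We would first try to prove $\int^\infty\scal\,dr<\infty$ from Step 2 and deduce $|F-r|=\BigO(1)$ from it.
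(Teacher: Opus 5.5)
Your Step 3 has a genuine gap: it misidentifies the hypothesis that the exponential-decay input from \cite{MSW19,C19} needs. What the paper feeds into \cite[Theorem 2]{C19} is the quantitative decay $\lim_{r\to\infty} r|\Rm|=0$, i.e.\ $|\Rm|=o(r^{-1})$. The qualitative condition $|\Rm|\to0$ cannot suffice, even with noncollapsing, $\scal\to0$ and $|\nabla f|\to1$: the Bryant soliton has all of these, yet $|\Rm|\sim r^{-1}$.

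Your maximum-principle sketch does not bridge this either. With $\Delta_f=\Delta-\langle\nabla f,\nabla\cdot\rangle$ one has $\Delta_f F=\scal+|\nabla F|^2=1$. So $u=(CF)^{-1}$ satisfies $\Delta_f u=2C^{-1}|\nabla F|^2F^{-3}-C^{-1}F^{-2}\geq -Cu^2$, and $u\to0$. Hence no comparison argument whose only inputs are $\Delta_f|\Rm|\geq -C|\Rm|^2$ and $|\Rm|\to0$ can exclude decay like $r^{-1}$; some rate $o(r^{-1})$ is indispensable.

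Conversely, the two points you flag as the main obstacles are already settled. $F-r=\BigO(1)$ is Lemma \ref{lm-linear equivalent of F}, which uses only $h\in\mathcal{L}^1$. The uniform lower bound on the harmonic radius (hence, with bounded $\Rm$, on the injectivity radius) is produced in the proof of Theorem \ref{thm-bounded Rm with L1 condition}. Tellingly, your argument never really uses the hypothesis $\lim_{r\to\infty}H(r)=0$.

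The missing step is short, and you nearly wrote it in Step 2. Because $\overline{g_\infty}$ is flat, the $\mathcal{C}^2$ estimate of Theorem \ref{thm-bounded Rm with L1 condition} gives $|\Rm|(x)=\BigO(\int_{r(x)}^{\infty}h)=\BigO(H(r)/r)$, using Lemma \ref{lm-linear equivalent of F} to pass between $F$ and $r$. Hence $r|\Rm|\leq CH(r)\to0$. This is exactly what \cite[Theorem 2]{C19} needs, and it yields $|\Rm|\leq Ce^{-r}$. Then $|\Ric|=\BigO(e^{-r})$, and Corollary \ref{cl-curvature estimate with h(r)=exp(-r)} with $\kappa=1$ gives the $\mathcal{C}^k$ estimate.

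That is the paper's proof. The Dini-type hypothesis enters only through $H(r)\to0$, to manufacture the $o(r^{-1})$ rate. It is not used to obtain noncollapsing or to integrate errors along the flow, and there is no need to re-derive the exponential decay yourself.
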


\vskip 0.2cm
The structure of this paper is as follows: We recall some elementary properties of the gradient steady soliton in Section \ref{preliminary}. In section \ref{compactness}, we prove the compactness theorem of steady solitons and discuss some applications. Finally, we confirm the convergence rate of steady Ricci solitons with $\mathcal{L}^{1}$ decay of Ricci curvature and its applications.

\vskip0.2cm
{\it Acknowledgement}: The author thanks Pak-Yeung Chan for helpful discussions. 

\section{Preliminary of steady Ricci soliton}\label{preliminary}
In this section, we recall some basic properties of steady Ricci solitons that we will use in later sections.
\begin{lm}\label{seqn}\cite{H95}
    Let $(\M,g,f)$ be a complete gradient steady Ricci soliton. Then
    \begin{align}
        &\label{eq-scal+Delta f=0}\scal+\Delta f=0;\\
        &\nabla_{i}\scal=2\Ric_{ij}\nabla_{j}f;\\
        &\label{normalized cond}\scal+|\nabla f|^{2}=C_{0},
    \end{align}
    for some constant $C_{0}$.
\end{lm}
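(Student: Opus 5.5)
The three identities are standard; I will derive them in order by contracting the soliton equation $\Ric+\Hess f=0$ and using the contracted second Bianchi identity.

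For the first identity I trace $R_{ij}+\nabla_i\nabla_j f=0$ with $g$, which gives $\scal+\Delta f=0$ directly.

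For the second identity I take the divergence of the soliton equation:
\begin{equation*}
\nabla_i R_{ij}+\nabla_i\nabla_i\nabla_j f=0 .
\end{equation*}
The contracted Bianchi identity gives $\nabla_i R_{ij}=\tfrac12\nabla_j\scal$. Commuting derivatives gives
\begin{equation*}
\nabla_i\nabla_i\nabla_j f=\nabla_j\Delta f+R_{jk}\nabla_k f .
\end{equation*}
Substituting $\Delta f=-\scal$ from the first identity yields
\begin{equation*}
\tfrac12\nabla_j\scal-\nabla_j\scal+R_{jk}\nabla_k f=0,
\end{equation*}
that is, $\nabla_j\scal=2R_{jk}\nabla_k f$.

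For the third identity I compute
\begin{equation*}
\nabla_j|\nabla f|^2=2\nabla_j\nabla_k f\,\nabla_k f=-2R_{jk}\nabla_k f,
\end{equation*}
using the soliton equation. Adding this to the second identity gives $\nabla(\scal+|\nabla f|^2)=0$. Since $\M$ is connected, $\scal+|\nabla f|^2$ equals some constant $C_0$.

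The only step needing care is the commutation formula $\nabla_i\nabla_i\nabla_j f=\nabla_j\Delta f+R_{jk}\nabla_k f$, specifically its sign convention. Completeness plays no role in any of these computations.
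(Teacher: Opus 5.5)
Your proposal is correct: the trace, the divergence combined with the contracted Bianchi identity and the commutation $\nabla_i\nabla_i\nabla_j f=\nabla_j\Delta f+R_{jk}\nabla_k f$ (your sign is right), and the computation of $\nabla|\nabla f|^2$ make up the standard derivation. The paper gives no proof of its own and simply cites Hamilton \cite{H95}, where these identities are obtained by the same computation.
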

$(\M,g,f)$ is said to be normalized gradient steady soliton if $C_{0}=1$. 
\begin{rmk}
    As a result of Chen \cite{C09}, the scalar curvature $\scal\geq0$ for a complete steady soliton. Therefore, $C_{0}=0$ only if $f$ is constant, which implies $(\M,g)$ is Ricci flat. Hence, for a non-Ricci flat complete steady soliton, upon scaling the metric, we may as well assume it is normalized such that $C_0=1$.  
\end{rmk}

\begin{lm}\label{lm-linear equivalent of F}
    Let $(\M,g,f)$ be a steady Ricci soliton with \ref{Ass-0} and \ref{Ass-1}. Then there is a $C>1$ such that
    \begin{equation}
        r-C\leq F:=-f\leq r-f(p), 
    \end{equation}
    for $r(x):=d_{g}(p,x)$ for some $p\in\M$.
\end{lm}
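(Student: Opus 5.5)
Both inequalities are proved by integrating $F=-f$ along suitable curves, using the normalization $\scal+|\nabla f|^{2}=1$ from \ref{Ass-0} together with $\scal\geq0$ (Chen's result, see the Remark after Lemma \ref{seqn}).

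\emph{Upper bound.} Since $\scal\geq0$, we have $|\nabla F|\leq1$, so $F$ is $1$-Lipschitz. Hence for every $x$,
\[
F(x)\leq F(p)+d_{g}(p,x)=r(x)-f(p).
\]

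\emph{Lower bound, strategy.} Here we need $|\nabla F|$ close to $1$ on most of any minimizing path. Fix $x$ far away and let $\gamma:[0,r]\to\M$ be a unit-speed minimizing geodesic from $p$ to $x$, with $r=r(x)$. Along $\gamma$,
\[
\frac{d^{2}}{ds^{2}}F(\gamma(s))=-\Hess f(\dot\gamma,\dot\gamma)=\Ric(\dot\gamma,\dot\gamma)\geq -|\Ric|(\gamma(s))\geq -C h(s)
\]
for $s\geq s_{0}$, by \ref{Ass-1}. Here we use $r(\gamma(s))=s$ and that $h$ is non-increasing for large arguments. On $[0,s_{0}]$ the quantity $|\Ric|$ is bounded by compactness. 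Integrating once gives, for $s\geq s_{1}\geq s_{0}$,
\[
\frac{d}{ds}F(\gamma(s))\geq \frac{d}{ds}F(\gamma(s))\Big|_{s=s_{1}}-C\int_{s_{1}}^{\infty}h(t)\,dt .
\]
This propagates derivative information forward, so it only helps if $\frac{d}{ds}F\circ\gamma$ is close to $1$ at some starting point $s_{1}$, uniformly in $x$. That is the main obstacle.

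\emph{Lower bound, main step.} I would argue backwards instead. Integrating the second-derivative bound from $s$ to $r$ gives
\[
\frac{d}{ds}F(\gamma(s))\leq \frac{d}{ds}F(\gamma(r))+C\int_{s}^{r}h\leq 1+C\int_{s}^{\infty}h,
\]
which is only an upper bound and not useful. The genuinely useful route uses the gradient flow of $F$. The flow line $\sigma$ of $\nabla F/|\nabla F|$ through a point $y$ satisfies
\[
\frac{d}{dt}|\nabla F|^{2}=2\Hess F(\nabla F,\nabla F)/|\nabla F|=2\Ric(\nabla F,\nabla F)/|\nabla F|,
\]
so $|\nabla F|$ changes along $\sigma$ by at most $C\int h$, provided $r$ grows linearly along $\sigma$. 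Properness of $f$ together with $|\nabla F|\leq1$ ensures $F\to\infty$ at infinity. Suppose $|\nabla F|^{2}(y_{k})=1-\scal(y_{k})\leq\frac12$ at points $y_{k}\to\infty$. Then $\scal$ is large along the entire forward flow line. On the other hand, $\scal\leq C|\Ric|\leq Ch(r)\to0$ since $h\in\mathcal L^{1}$ is monotone. This is a contradiction, so $\scal\to0$ at infinity directly, and thus $|\nabla F|\to1$.

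\emph{Lower bound, conclusion.} Choose $R_{0}$ with $|\nabla F|\geq\frac12$ on $\{r\geq R_{0}\}$. The second-derivative inequality along $\gamma$ still gives
\[
\frac{d}{ds}F\circ\gamma\geq \frac{d}{ds}F\circ\gamma\big|_{s_{1}}-C\int_{s_{1}}^{\infty}h .
\]
To get a starting point with derivative near $1$, I would instead compare with the flow line. Follow the integral curve of $\nabla F/|\nabla F|^{2}$ backwards from $x$ until it reaches $\{F=R_{0}\}$, a compact set by properness. Along this curve the length element is $dt/|\nabla F|$ and $|\nabla F|=\sqrt{1-\scal}\geq1-Ch(r)$. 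Provided $r$ is comparable to $F$ along the curve, the length needed to go from $F=R_{0}$ to $F=F(x)$ is at most
\[
F(x)-R_{0}+C\int h<F(x)+C'.
\]
Hence
\[
r(x)\leq \operatorname{diam}\{F\leq R_{0}\}+F(x)+C',
\]
which is the lower bound $F\geq r-C$. The comparability of $r$ and $F$ along the curve is established by a continuity argument, using the crude bound $F\leq r+C$ and $|\nabla F|\geq\frac12$.
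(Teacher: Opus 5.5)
Your final argument is correct and is essentially the paper's. You flow backwards along $\nabla F/|\nabla F|^{2}$ from $x$ to the compact level set $\{F=R_{0}\}$, bound the length of this curve by $\int_{R_{0}}^{F(x)}|\nabla F|^{-1}\,dt\leq F(x)-R_{0}+C\int h$, and conclude with the triangle inequality.

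The only comparability you need along the flow line is $r\geq F-F(p)$, which is exactly your already-proved upper bound. Together with the monotonicity of $h$, it gives $\scal\leq Ch(F-C)$ immediately, so no continuity argument is needed, and the geodesic and contradiction detours can be dropped. You should also choose $R_{0}$ so that $|\nabla F|\geq\frac12$ on $\{F\geq R_{0}\}$ rather than on $\{r\geq R_{0}\}$. This is possible since $\{F\geq R_{0}\}\subset\{r\geq R_{0}-F(p)\}$.
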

\begin{proof}[Proof of Lemma \ref{lm-linear equivalent of F}]
    We adopt the approach in  \cite[Lemma 2]{C19} and \cite[Corollary 3.4]{CMZ22}. Since $(\M,g,f)$ is a complete gradient Ricci soliton, $\scal\geq0$ holds on $\M$ by \cite{C09}. Therefore, $|{\nabla F}|\leq1$ holds on $\M$. This completes the proof of the right-hand side. For the left-hand side, since $\scal\rightarrow0$ as $r\rightarrow\infty$ and $F$ is proper, we may assume $|{\nabla F}|\geq\frac{1}{2} $ whenever $F\geq R_{0}$. Denote the flow $\varphi_{t}$ with $\frac{d}{dt}\varphi_{t}=\frac{\nabla F}{|{\nabla F}|^{2}}$ with $\varphi_{0}=\text{id}$ on $F\geq R_{0}$. Then for $F(x)=R>R_{0}$, 
    \begin{equation}
        \begin{aligned}
            d_{g}(x,\varphi_{R_{0}-R}(x))\leq&~\int_{0}^{R-R_{0}}\frac{1}{|{\nabla F}|}(\varphi_{R_{0}-R+t}(x))dt\\
            =&~\int_{0}^{R-R_{0}}\frac{1}{{\sqrt{1-\scal}}}(\varphi_{R_{0}-R+t}(x))dt\\
            \leq&\int_{0}^{R-R_{0}}1+\frac{1}{2}\scal(\varphi_{R_{0}-R+t}(x))dt\\
            \leq&~R-R_{0}+C\int_{R_{0}}^{\infty}h(t)dt\\
            \leq&~F(x)+C.
        \end{aligned}
    \end{equation}
    Also, $d_{g}(x,\varphi_{R_{0}-R}(x))\geq r(x)-\sup_{q\in\{F=R_{0}\}}d_{g}(p,q)=:r(x)-C_{0}$ whenever $F(x)>R_{0}$. Since $F$ is proper, this completes the proof. 
\end{proof}
We recall a general result of the end at infinity of complete gradient steady Ricci solitons by \cite{MW11}.
\begin{lm}\cite{MW11}
    Let $(\M,g,f)$ be a non-Ricci flat complete gradient steady Ricci soliton. Then it has at most one end. 
\end{lm}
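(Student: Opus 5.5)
The statement is the Munteanu--Wang result: a non-Ricci flat complete gradient steady Ricci soliton has at most one end. The plan is to show that two ends would force a line, split off an $\R$-factor, and then use the soliton equation to make the soliton Ricci flat. I would use the normalization of Lemma \ref{seqn} with $C_{0}=1$, which is allowed by the Remark since the soliton is non-Ricci flat.

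\textbf{Step 1: a weighted Laplacian comparison.} Let $\Delta_{f}:=\Delta-\langle\nabla f,\nabla\cdot\rangle$ and $\Ric_{f}=\Ric+\Hess f=0$. Along a minimizing geodesic $\gamma$ from a point $x_{0}$, the Bochner formula gives, in the barrier sense,
\begin{equation*}
\Delta_{f}\, d(x_{0},\cdot)\leq \frac{n-1}{d}-\langle\nabla f,\dot\gamma\rangle\big|_{\gamma(0)}+\langle\nabla f,\dot\gamma\rangle\big|_{\gamma(d)}\, .
\end{equation*}
In general this is not a good bound. Here, however, $|\nabla f|\leq 1$ because $\scal\geq0$ \cite{C09} and $\scal+|\nabla f|^{2}=1$. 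So the $f$-terms are bounded, but they are not small.

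\textbf{Step 2: a line and splitting.} Suppose there are two ends. Then there is a line $\sigma$ joining them, and we form the Busemann functions $b_{\pm}$ along its two rays.
\begin{itemize}
\item Step 1 yields $\Delta_{f}b_{\pm}\leq \langle\nabla f,\nabla b_{\pm}\rangle+o(1)$ in the barrier sense, up to sign conventions.
\item Consequently $\Delta_{f}(b_{+}+b_{-})\leq \langle\nabla f,\nabla b_{+}+\nabla b_{-}\rangle$ in the barrier sense.
\item This operator is a drift Laplacian with bounded drift. Since $b_{+}+b_{-}\geq0$ and it vanishes on $\sigma$, the strong maximum principle (Calabi/Eschenburg--Heintze) forces $b_{+}+b_{-}\equiv0$.
\item Then $b=b_{+}$ is smooth and $f$-harmonic, modulo the drift term. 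Bochner with $\Ric_{f}=0$ gives $\Hess b=0$.
\end{itemize}
Hence $M=\R\times N$, and $f$ splits as $f=ar+f_{N}$. The steady equation on the $\R$ factor forces $f''=0$. So $N$ carries a steady soliton with potential $f_{N}$, and $\scal_{M}=\scal_{N}$.

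\textbf{Step 3 (main obstacle): getting the line case right.} The real difficulty is Step 2, because the drift term $\langle\nabla f,\dot\gamma\rangle$ does not vanish. A cleaner route, which I would try first, runs as follows.
\begin{itemize}
\item Choose points $x_{k}$ on $\sigma$ going to both ends. Use the quantity $\scal+|\nabla f|^{2}=1$ together with the fact that $\langle\nabla f,\dot\sigma\rangle$ is monotone along $\sigma$, since $\frac{d}{dt}\langle\nabla f,\dot\sigma\rangle=\Hess f(\dot\sigma,\dot\sigma)=-\Ric(\dot\sigma,\dot\sigma)$.
\item Integrate the second variation along long segments of $\sigma$: $\int\Ric(\dot\sigma,\dot\sigma)\leq (n-1)\pi^{2}/L$-type bounds. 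These give $\int_{-\infty}^{\infty}\Ric(\dot\sigma,\dot\sigma)\leq0$ in the limit.
\item The total change of $\langle\nabla f,\dot\sigma\rangle$ between the ends is therefore $\geq0$. This makes the drift terms from the two ends cancel in the sum, so the maximum principle argument goes through.
\item In the resulting product, if $f$ depends nontrivially on the $\R$-factor, the equation on $N$ and the identity $\scal+|\nabla f|^{2}=1$ show that the $N$-part is a steady soliton with fewer ends. Each factor is then treated inductively: a steady soliton on compact $N$ is Ricci flat.
\end{itemize}

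Under the standing assumptions \ref{Ass-0}, the properness of $f$ already gives an end $F\to\infty$, which is consistent with this picture. If the cancellation in Step 3 fails, I would fall back on the argument of \cite{MW11}. That argument uses a weighted volume estimate, $\int e^{f}$ finite on one side, to rule out a second $f$-nonparabolic end.
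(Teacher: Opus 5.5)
The paper gives no argument for this lemma; it simply quotes \cite{MW11}. Your own route has a genuine gap, and it sits exactly at the splitting step, which is the whole content of the lemma.

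The inequality in your Step 1 does not follow from the Riccati inequality $m'+\frac{m^{2}}{n-1}\leq\psi'$, where $m=\Delta r$ and $\psi=\langle\nabla f,\dot\gamma\rangle$. The quadratic term prevents you from integrating $\psi'$ to endpoint values. Multiplying by $t^{2}$ and integrating by parts gives the correct Bakry--\'Emery comparison
\[
\Delta_{f}r(\gamma(r))\leq\frac{n-1}{r}-\frac{2}{r^{2}}\int_{0}^{r}t\,\psi(t)\,dt=\frac{n-1}{r}-\frac{2f(\gamma(r))}{r}+\frac{2}{r^{2}}\int_{0}^{r}f(\gamma(t))\,dt .
\]
The drift term here is an average of $f$ over the whole geodesic. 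It is $O(1/r)$ when $f$ is bounded above; that is the Wei--Wylie/Fang--Li--Zhang splitting theorem, and it would make your argument work under \ref{Ass-0}, where $f<0$. The lemma has no such hypothesis, however, and the two-ended model $\R\times\N$ with $f=t$ shows $f$ need not be bounded above. Take a line $\sigma$ and set $L_{\pm}:=\lim_{s\to\pm\infty}\langle\nabla f,\dot\sigma\rangle$ (when these exist). Along $\sigma$ the comparison only gives $\Delta_{f}b_{+}\leq L_{+}$ and $\Delta_{f}b_{-}\leq -L_{-}$. Hence $\Delta_{f}(b_{+}+b_{-})\leq L_{+}-L_{-}=-\int_{\R}\Ric(\dot\sigma,\dot\sigma)\,ds$, an $O(1)$ error that does not go away.

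Step 3 does not repair this.
\begin{itemize}
\item The quantity $\langle\nabla f,\dot\sigma\rangle$ is monotone only if $\Ric(\dot\sigma,\dot\sigma)$ has a sign, which is not assumed.
\item Passing from second variation to $\int_{\R}\Ric(\dot\sigma,\dot\sigma)\leq0$ needs $\Ric(\dot\sigma,\dot\sigma)\in\mathcal{L}^{1}(\R)$, which is not a hypothesis of the lemma.
\item When that inequality does hold, it says the error $L_{+}-L_{-}$ above is $\geq0$. That is the wrong sign for the minimum principle, not a cancellation.
\item Off $\sigma$ the relevant geodesics are asymptotic rays from $x$, so $\sigma$ alone does not control them.
\item Even if you reached $\Delta_{f}b=\langle\nabla f,\nabla b\rangle$, that is $(\Delta-2\nabla f\cdot\nabla)b=0$. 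The Bochner formula for this operator involves $\Ric+2\Hess f=-\Ric$, which has no sign, so $\Hess b=0$ does not follow. You need $\Delta_{f}b=0$ exactly.
\end{itemize}

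The missing ingredient is what \cite{MW11} works with: the spectrum of $\Delta_{f}$ rather than Busemann functions. On a normalized steady soliton, $\Delta_{f}f=-\scal-|\nabla f|^{2}=-1$ and $|\nabla f|\leq1$ \cite{C09}. Testing $\phi^{2}$ against $-\Delta_{f}f$ gives $\lambda_{1}(\Delta_{f})\geq\frac14$. This is the extremal value in their sharp upper bound for $\Ric_{f}\geq0$ with $f$ of linear growth rate at most $1$. Their rigidity theorem for this extremal case then gives either one end or $\M=\R\times\N$ with $\N$ compact. From there your last step is the right finish: on compact $\N$ the soliton is Ricci flat, hence so is $\M$. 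Drop the ``fewer ends/induction'' wording, though. Your fallback, ``$\int e^{f}$ finite on one side,'' is not yet an argument.
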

\section{A compactness theorem of gradient Ricci soliton}\label{compactness}
We recall the definition of harmonic radius.
\begin{df}\label{df-harmonic radius}[Harmonic radius, \cite{H19}] Given a Riemannian manifold $(\M,g)$ and a point $x\in\M$. For $\Lambda\geq1$, $k\in\mathbb{N}_{0}$, and $\alpha\in(0,1)$, the number $r_{H}(\Lambda,k,\alpha)(x)$ denotes the largest radius of the ball $B_{g}(x,r_{H})$, such that there exists a harmonic coordinate such that
\begin{equation}
    \Lambda^{-1}\delta_{ij}\leq g_{ij}\leq \Lambda\delta_{ij},
\end{equation}
and 
\begin{equation}
    \sum_{1\leq|\beta|\leq k}r_{H}^{|\beta|}\sup|\partial^{\beta}g_{ij}|+\sum_{|\beta|= k}r_{H}^{k+\alpha}\sup_{z\neq y\in B_{g}(x,r_{H})}\frac{|\partial^{\beta}g_{ij}(z)-\partial^{\beta}g_{ij}(y)|}{d_{g}(z,y)^{\alpha}}\leq \Lambda-1.
\end{equation}
    For $p>n$, the number $r_{H}(\Lambda,k,p)(x)$ denotes the largest radius of the ball $B_{g}(x,r_{H})$, such that there exists a harmonic coordinate such that
\begin{equation}
    \Lambda^{-1}\delta_{ij}\leq g_{ij}\leq \Lambda\delta_{ij},
\end{equation}
and 
\begin{equation}
    \sum_{1\leq|\beta|\leq k}r_{H}^{2-\frac{n}{p}}\|\partial^{\beta}g_{ij}\|_{\mathcal{L}^{p}}\leq \Lambda-1.
\end{equation}
Also, we consider the notations $\mathcal{C}^{k,\alpha}_{H}(x,r)$ and $\mathcal{W}^{k,p}_{H}(x,r)$ as the infimum of $\Lambda$ over all harmonic coordinate on ball $B_{g}(x,r)$.
\end{df}
\vskip0.2cm
One important identity under the harmonic coordinate is 
\begin{equation}\label{eq-Bochner formula under harmonic coordinate}
    \Delta g_{ij}=-2\Ric_{ij}+Q(g,\partial g)_{ij}.
\end{equation}
Therefore, a standard strategy in harmonic coordinates is to use the Schauder estimate to bootstrap the regularity of $g$. 
Since the regularity of $\Ric$ is equivalent to the regularity of $\Hess f$ on Ricci soliton, we are able to use the soliton identity to establish the $\mathcal{C}^{\alpha}_{H}$ bound of $\Ric$ when $(\M,g,f,p)\in\mathfrak{M}(n,K,A,r_{0},\lambda,\alpha,\Lambda,C_{0})$, which implies the local uniform curvature bounds by the Schauder estimate.
\begin{proof}[Proof of Theorem \ref{thm-compactness of Ricci soliton}]
We begin with the following a priori estimate of $\Rm$.
\begin{lm}\label{lm-uniform curvature bound in class m}
    Let $(\M,g,f,p)\in\mathfrak{M}(n,K,A,r_{0},\lambda,\alpha,\Lambda,C_{0})$. Then for any $R>0$ and $k\geq0$, there is a constant $C_{R,k}(n,K,A,r_{0},\lambda,\alpha,\Lambda,C_{0})>0$ such that $|{\nabla^{k}\Rm}|_{g}\leq C_{R,k}$ on $B_{g}(p,R)$. Furthermore, in the case of $K,r_{0}$ are constant, if $|{\nabla f}|\leq A$ holds on $\M$ or $\lambda=0$, then $C_{R,k}$ is independent of $R$ .
\end{lm}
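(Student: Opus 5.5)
Fix $R>0$ and work on $B_{g}(p,R+1)$. First I would bound $f$ and $\nabla f$ there. From \eqref{eq-soliton identity with scal and nabla f}, $|\nabla f|^{2}=2\lambda f+C_{0}-\scal\leq 2|\lambda||f|+C_{0}+K_{R}$, where $K_{R}:=\max_{[0,R+2]}K$. Hence $u:=|f|$ satisfies $|\nabla \sqrt{u+c}|\leq C$ for a suitable $c$. Integrating along minimal geodesics from $p$ and using $|f|(p)\leq A$ gives $\sup_{B(p,R+2)}|f|+|\nabla f|\leq C(R)$.

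If $\lambda=0$, then $|\nabla f|^{2}=C_{0}-\scal\leq C_{0}+K$ globally, so this bound is independent of $R$. The same holds if $|\nabla f|\leq A$ is assumed.

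Next, around each $q\in B(p,R)$, take the harmonic chart on $B(q,r)$ with $r:=\min_{[0,R+1]}r_{0}$. In it $g_{ij}\in\mathcal{C}^{1,\alpha}$ with bounds depending only on $\Lambda$ and $r$. The bootstrap uses the soliton equation in coordinates,
\begin{equation*}
\partial_{i}\partial_{j}f=\lambda g_{ij}-\Ric_{ij}+\Gamma^{k}_{ij}\partial_{k}f,
\end{equation*}
together with the trace identity $\Delta_{g}f=n\lambda-\scal$. The scalar curvature is only in $L^{\infty}$, so $\Delta_{g}f\in L^{\infty}$. Since $\Delta_{g}=g^{ij}\partial_{i}\partial_{j}$ in harmonic coordinates with $\mathcal{C}^{1,\alpha}$ coefficients, $W^{2,p}$ estimates give $f\in W^{2,p}$ for all $p$, hence $f\in\mathcal{C}^{1,\beta}$.

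This is not yet enough, so I would use the scalar curvature equation. We have $\nabla\scal=2\Ric(\nabla f)=2(\lambda\nabla f-\Hess f(\nabla f))=2\lambda\nabla f-\nabla|\nabla f|^{2}$, which is consistent with the identity above, so $\scal$ has the regularity of $|\nabla f|^{2}$. Thus $\scal\in\mathcal{C}^{\beta}$, and therefore $\Delta_{g}f=n\lambda-\scal\in\mathcal{C}^{\beta}$. Schauder now gives $f\in\mathcal{C}^{2,\beta}$, so $\Hess f$ and $\Ric=\lambda g-\Hess f$ lie in $\mathcal{C}^{\beta}$ in harmonic coordinates.

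Then \eqref{eq-Bochner formula under harmonic coordinate} gives $\Delta g_{ij}=-2\Ric_{ij}+Q(g,\partial g)\in\mathcal{C}^{\beta}$, so $g\in\mathcal{C}^{2,\beta}$. I then iterate, always with constants depending on $R$ only through the bounds above:
\begin{itemize}
\item $g\in\mathcal{C}^{k,\beta}$ gives $\Gamma\in\mathcal{C}^{k-1,\beta}$ and $\Delta_{g}$ with $\mathcal{C}^{k-1,\beta}$ coefficients;
\item $\scal=C_{0}+2\lambda f-|\nabla f|^{2}$ gains regularity together with $f$, so Schauder gives $f\in\mathcal{C}^{k+1,\beta}$;
\item hence $\Ric\in\mathcal{C}^{k-1,\beta}$, and the equation for $g$ gives $g\in\mathcal{C}^{k+1,\beta}$.
\end{itemize}
The induction shows $g\in\mathcal{C}^{k,\beta}$ on $B(q,r/2)$ for every $k$, with uniform bounds. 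Since curvature is a polynomial in $g^{-1}$ and $\partial^{\leq2}g$, this gives $|\nabla^{k}\Rm|\leq C_{R,k}$.

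For the $R$-independence: when $K$ and $r_{0}$ are constant and $f$ enters only through $\nabla f$, the bounds are uniform. The constant $\lambda$ is harmless since $\Hess f$ involves only derivatives of $f$. The potential difficulty is that $f$ itself may be unbounded; I would fix this by subtracting $f(q)$ in each chart, which changes nothing in the equations. The main obstacle is the first regularity gain, passing from $L^{\infty}$ to Hölder control of $\scal$. The key is that the soliton identity expresses $\scal$ through $f$ and $|\nabla f|^{2}$, and my plan is to close exactly that step this way.
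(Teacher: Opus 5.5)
Your proof is correct and follows essentially the same route as the paper. You bound $f$ and $\nabla f$ via a gradient bound on a square root coming from $\scal+|\nabla f|^{2}=2\lambda f+C_{0}$. You then use $L^{p}$ theory on $\Delta f=n\lambda-\scal$ to get $f\in\mathcal{C}^{1,\beta}$, and read off Hölder control of $\scal$ from the same identity. Schauder then gives $f\in\mathcal{C}^{2,\beta}$, hence $\Ric\in\mathcal{C}^{\beta}$, and $\Delta g_{ij}=-2\Ric_{ij}+Q(g,\partial g)_{ij}$ yields $\mathcal{C}^{2,\beta}$ control of $g$. The $R$-independence comes from normalizing $f(q)=0$ when $|\nabla f|$ is globally bounded.

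The only deviation is that you obtain higher derivatives by continuing the elliptic bootstrap rather than invoking local Shi's estimate; the paper itself treats these as interchangeable in the proof of Corollary \ref{cl-GH limit of gradient Ricci soliton}. One small point: for $\lambda\neq0$, subtracting $f(q)$ does change the constant in the identity, contrary to your remark that it "changes nothing in the equations." This is harmless, because the $\mathcal{C}^{\beta}$ bound on $\scal$ only uses $\sup|\scal|$ and the Hölder seminorm of $2\lambda f-|\nabla f|^{2}$, and that seminorm is unaffected by adding a constant to $f$.
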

\begin{proof}[Proof of Lemma \ref{lm-uniform curvature bound in class m}]
We first consider the case $\lambda\neq0$. Thanks to the compactness theorem, it suffices to show that for any $R>0$ and $k\in\mathbb{N}_{0}$, there is a uniform upper bound $C_{R,k}$ such that $|{\nabla^{k}\Rm}|\leq C_{R,k}$ on $B_{g}(p,R)$ whenever $(\M,g,f,p)\in\mathfrak{M}(n,K,A,r_{0},\lambda,\alpha,\Lambda,C_{0})$. Note that since $r_{H}(\Lambda,1,\alpha)(p)\geq r_{0}(0)$, the injectivity radius at $p$ follows from the local $\Rm$ bound on $B_{g}(p,r_{0}(0))$ by the classical Cheeger-Gromov-Taylor theorem. 

\vskip0.2cm
Fixed $R>0$ and $q\in\M$ with $q\in B_{g}(p,R)$. Define $r_{0,R}:=\inf_{B_{g}(p,R)}r_{0}$ and $K_{R}:=\sup_{B_{g}(p,R)}K$. Since $r_{H}(\Lambda,1,\alpha)(q)\geq r_{0,R}$, there is a harmonic coordinate chart on $B_{g}(q,r_{0,R})$ with uniform $\mathcal{C}^{1,\alpha}_{H}(q,r_{0,R})$ bound. By (\ref{eq-soliton identity with scal and nabla f}) and $|{\scal}|\leq nK_{R}$, 
\begin{equation}
    |{\nabla\sqrt{2\lambda f+(C_{0}+nK_{R})}}|\leq|{\lambda}|,
\end{equation}
whenever $\lambda\neq0$, Hence, there is a constant $C$, only depending on $n,K_{R},A,r_{0,R},\lambda,\alpha,\Lambda,C_{0}$, such that $|{f(q)}|\leq CR^{2}+C$. Combining with (\ref{eq-soliton identity with scal and nabla f}), we obtain a uniform  $\mathcal{L}^{\infty}$ bound for $f$ and $|{\nabla f}|$ (may depend on $R$) on the harmonic coordinate ball. 

\vskip0.2cm
Note that our goal is to bootstrap the regularity to $\mathcal{C}^{2,\alpha}_{H}(q,cr_{0,R})\leq \Lambda'$ for some universal $c\in(0,1)$ and $\Lambda'$ may depend on all the constant above. This provides a uniform upper bound of $\Rm$ on $B_{g}(q,cr_{0,R})$, and the higher derivative of $\Rm$ follows from local Shi's estimate. From now on, the constant $C$, which only depends on $n,K_{R},A,r_{0,R},\lambda,\alpha,\Lambda,C_{0},R$, may change from line to line.

\vskip0.2cm
Taking the trace of the soliton equation gives $\scal+\Delta f=n\lambda$. By the uniformly ellipticity, uniform $\mathcal{C}^{1,\alpha}_{H}(q,r_{0,R})$, and the $\mathcal{L}^{p}$-theory, we obtain a uniform $\mathcal{W}^{2,p}$ bound of $f$ on $B_{g}(q,\frac{r_{0,R}}{2})$ for $p>n/2$ since $\scal$ and $|{f}|+|{\nabla f}|$ are bounded by $C$. Hence, the Sobolev inequality implies a uniform $\mathcal{C}^{1,\alpha}$ bound of $f$ on $B_{g}(q,\frac{r_{0,R}}{2})$. Combining with (\ref{eq-soliton identity with scal and nabla f}), this implies a $\mathcal{C}^{\alpha}$ bounds of $\scal$ on $B_{g}(q,\frac{r_{0,R}}{2})$.

 \vskip0.2cm
 Also, since $\scal+\Delta f=n\lambda$, the Schauder estimate (which is uniform by the $\mathcal{C}^{1,\alpha}_{H}(q,\frac{r_{0,R}}{2})\leq\Lambda$), we yield the uniform $\mathcal{C}^{2,\alpha}$ bounds for $f$ on $B_{g}(q,\frac{r_{0,R}}{4})$. 

 \vskip0.2cm
 Applying the soliton equation again, we deduce the uniform $\mathcal{C}^{\alpha}$ bounds for $\Ric$ on $B_{g}(q,\frac{r_{0,R}}{4})$. Finally, in harmonic coordinate, this implies the uniform $\mathcal{C}^{2,\alpha}_{H}(q,\frac{r_{0,R}}{8})$ bounds of metric $g$ by Schauder estimate, and hence a uniform bound of $\Rm$ on $B_{g}(q,\frac{r_{0,R}}{8})$. See \cite{A90} for instance.

\vskip0.2cm
For the case of $\lambda=0$ and $r_{0},K$ are constants, note that $C_{0}$ is bounded from above by $K,A$ by (\ref{eq-soliton identity with scal and nabla f}). Therefore,
\begin{equation}
    |{\nabla f}|^{2}\leq C_{0}+nK,
\end{equation}
on $\M$. If the $|{\nabla f}|\leq C$ for some $C>0$ on $\M$, for each $q\in\M$, we may assume $f(q)=0$ by adjusting $f$ by $f-f(q)$. Then all the estimates above are independent of $R$, which completes the proof.
\end{proof}

The precompactness property then follows from Lemma \ref{lm-uniform curvature bound in class m}, Shi's estimate \cite[Remark 2.7]{D17}, and uniform $\mathcal{C}^{1,\alpha}_{h}$ bounds at $p$.
\end{proof}

\begin{proof}[Proof of Corollary \ref{cl-bounded curvature under uniform lower bound of r(1,alpha)}]
    Since $(\M,g,f)$ is a complete steady soliton, we may assume $\scal\geq0$ and $\scal+|{\nabla f}|^{2}=1$. Also, for any $p\in\M$, we may assume $f(p)=0$ by replacing $f$ with $f-f(p)$. Then by assumption, 
    \begin{equation}
        (\M,g,f,p)\in\mathfrak{M}\left(n,1,1,\frac{1}{2}\inf_{\M}r_{H}(\Lambda,1,\alpha),\alpha,\Lambda,1\right),
    \end{equation}
    and $|{\nabla f}|\leq1$ on $\M$. By Lemma \ref{lm-uniform curvature bound in class m}, this implies a uniform upper bound $\Rm$.
\end{proof}
\begin{proof}[Proof of Corollary \ref{cl-GH limit of gradient Ricci soliton}]
    The existence of $(X_{\infty},d_{\infty},p_{\infty})$, $\mathcal{C}^{1,\alpha}$-structure on $(\mathcal{R},g_{\infty})$, and the decomposition $X=\mathcal{R}\sqcup\mathcal{S}$ with $\dim_{\mathcal{H}}S\leq n-4$ follow from \cite{CN15, A90} since we assume two-sided Ricci bound and lower bound of volume. It suffices to show the $\mathcal{C}^{\infty}_{\text{loc}}$ convergence on $\mathcal{R}$. Since for each $\M_{i}$, there is a constant $C_{i}$ such that $\scal_{i}+|{\nabla f_{i}}|^{2}-2\lambda_{i}f_{i}=C_{i}$ holds on $\M_{i}$. At $p_{i}$, this implies
    \begin{equation}
        |{C_{i}}|\leq|{\scal_{i}}|(p_{i})+|{\nabla f_{i}}|^{2}(p_{i})+2|{\lambda_{i}f_{i}}|(p_{i})\leq n+A^{2}+2\lambda
         A=:C_{0},
    \end{equation}
    for all $i$. Therefore, as the argument in the proof of Lemma \ref{lm-uniform curvature bound in class m}, for any $R>0$, there is a uniform constant $\tilde{C}=\tilde{C}(n,A,C_{i},\lambda_{i},R)=\tilde{C}(n,A,C_{0},\lambda,R)$ such that 
    \begin{equation}
        |{f_{i}}|+|{\nabla f_{i}}|\leq \tilde{C}
    \end{equation} 
    on $B_{g_{i}}(p_{i},R)$ for all $i\gg1$. Then the Arzelà–Ascoli theorem provides a locally Lipschitz limit $f_{\infty}:X_{\infty}\rightarrow\R$ such that $f_{i}\rightarrow f_{\infty}$ locally uniformly.
    
    \vskip0.2cm
    Now we aim to show that $f_{\infty}$ is smooth on $\mathcal{R}$ and satisfies the soliton equation. For $q\in\mathcal{R}\cap B_{d_{\infty}}(p,R)$ and $\M_{i}\ni q_{i}\rightarrow q$, by Colding's volume convergence theorem  and Anderson's $\varepsilon$-regularity theorem \cite{C97,A90}, there exists constants $\Lambda,\alpha,r_{0}$ such that $r_{H}(\Lambda,1,\alpha)(q_{i})\geq r_{0}$ for $i\gg1$. Therefore, there exists a harmonic coordinate chart on $B_{g_{i}}(q_{i},r_{0})$ with $\mathcal{C}^{1,\alpha}_{H}(q_{i},r_{0})\leq\Lambda$. Since $q_{i}\rightarrow q$, this shows that $q_{i}\in B_{g_{i}}(p_{i},R+1)$ for $i\gg1$. As the proof of Lemma \ref{lm-uniform curvature bound in class m}, this implies a uniform upper bound of $\mathcal{C}^{2,\alpha}_{H}(q_{i},\frac{r_{0}}{8})$ bound for all $i\gg1$. Either apply the local Shi's estimate or the elliptic bootstrap argument of \eqref{eq-Bochner formula under harmonic coordinate}, one can conclude a uniform bound of $\mathcal{C}^{k,\alpha}_{H}(q_{i},\frac{r_{0}}{16})$ as $i\gg1$. This implies $B_{d_{\infty}}(q,\frac{r_{0}}{32})$ admits a smooth coordinate chart. By the soliton equation, this implies a uniform $\mathcal{C}^{k,\alpha}$ bound of $f_{i}$ on $B_{g_{i}}(q_{i},\frac{r_{0}}{16})$ for all $i\gg1$. Therefore, we obtain $f_{i}\rightarrow f_{\infty}$ smoothly on $B_{d_{\infty}}(q,\frac{r_{0}}{32})$. After passing to subsequence, we may assume $\lambda_{i}\rightarrow\lambda_{\infty}$ and this implies $\Ric_{g_{\infty}}+\Hess_{g_{\infty}}f_{\infty}=\lambda_{\infty}g_{\infty}$ on $\mathcal{R}$.
\end{proof}
Next, we explored whether $\lim_{r\rightarrow\infty}|{\Rm}|=0$ whenever $\liminf_{r\rightarrow\infty}r_{H}(\Lambda,1,\alpha)=\infty$. After imposing the decayed assumption on $\Ric$, we are able to confirm the following:
\begin{cl}\label{cl- scalar curvature with linear decayed}
    Let $(\M,g,f)$ be a complete gradient steady Ricci soliton. Suppose that there are constants $\Lambda,C>1$ and $\alpha\in(0,1)$ such that
    \begin{equation}
        |{\Ric}|(x)\leq\frac{C}{d_{g}(p,x)}\text{ and }r_{H}(\Lambda,1,\alpha)(x)\geq C^{-1}d_{g}(p,x)^{\frac{1}{3}},
    \end{equation}
    for all $x\in\M$ and some $p\in\M$. Then there is a constant $C'>0$ such that
    \begin{equation}\label{eq- cl3, 2}
        |{\Rm}|(x)\leq \frac{C'}{d_{g}(p,x)^{\frac{2}{3}}},       
    \end{equation}
    for all $x\in\M$. In particular, $\limsup_{r\rightarrow\infty}|{\Rm}|=0$.
\end{cl}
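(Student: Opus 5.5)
Fix $x\in\M$ with $d:=d_{g}(p,x)\gg1$ and set $\rho:=c\,r_{H}(\Lambda,1,\alpha)(x)\geq c\,C^{-1}d^{\frac{1}{3}}$ for a small universal $c$, so that $B_{g}(x,\rho)\subset\{d/2\leq d_{g}(p,\cdot)\leq 2d\}$. The plan is to redo the proof of Lemma \ref{lm-uniform curvature bound in class m} quantitatively on the ball $B_{g}(x,\rho)$, keeping track of the scale $\rho$ rather than treating it as fixed. Since we may normalize $\scal+|{\nabla f}|^{2}=1$ and use $\scal\geq0$ from \cite{C09}, we have $|{\nabla f}|\leq1$ everywhere. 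Replacing $f$ by $f-f(x)$ gives $|f|\leq\rho$ on the ball. On $B_{g}(x,\rho)$ we also have $|{\Ric}|\leq 2C/d$.

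Next I would rescale: $\tilde g:=\rho^{-2}g$ and $\tilde f:=f$. Then $\Ric_{\tilde g}+\Hess_{\tilde g}\tilde f=0$ still holds, the harmonic coordinates give a $\mathcal{C}^{1,\alpha}_{H}$ bound $\Lambda$ on the unit ball, and $|\tilde\nabla\tilde f|_{\tilde g}=\rho|{\nabla f}|\leq\rho$, $|\tilde f|\leq\rho$, $|{\Ric_{\tilde g}}|\leq 2C\rho^{2}/d$. Here I do not want bounds that blow up with $\rho$. So I would work with $\Hess_{\tilde g}\tilde f=-\Ric_{\tilde g}$, whose sup is $O(\rho^{2}/d)$, and get a H\"older bound on it. From $\nabla_{i}\scal=2\Ric_{ij}\nabla_{j}f$ I get $|\tilde\nabla \scal_{\tilde g}|\leq 2|\Ric_{\tilde g}||\tilde\nabla\tilde f|\leq C\rho^{3}/d$. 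Then $\Delta_{\tilde g}\tilde f=-\scal_{\tilde g}$ has $\mathcal{C}^{\alpha}$ norm of order $\rho^{2}/d+\rho^{3}/d$. Schauder then gives $\|\tilde\nabla^{2}\tilde f\|_{\mathcal{C}^{\alpha}}\lesssim \rho^{3}/d+\rho^{2}/d+\|\tilde f-\text{affine}\|_{\mathcal{L}^{\infty}}$. The linear part of $\tilde f$ is killed by subtracting a harmonic-coordinate linear function, which is legitimate because coordinate functions are harmonic. The remaining quadratic remainder is controlled by $\sup|\Hess\tilde f|\lesssim\rho^{2}/d$ through Taylor expansion in the $\mathcal{C}^{1,\alpha}$ chart, up to a term $\rho\cdot[\partial g]_{\alpha}$ coming from the Christoffel symbols. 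This is the delicate point; see below.

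Assuming this, $\|\Ric_{\tilde g}\|_{\mathcal{C}^{\alpha}(B_{1/2})}\lesssim\rho^{3}/d=O(1)$ by the choice $\rho\sim d^{1/3}$. This is exactly why the exponent $\frac13$ appears. The harmonic-coordinate equation \eqref{eq-Bochner formula under harmonic coordinate} with Schauder then gives a uniform $\mathcal{C}^{2,\alpha}_{H}$ bound on $B_{\tilde g}(x,1/4)$. Hence $|{\Rm_{\tilde g}}|\leq C''$, i.e.\ $|{\Rm}|(x)\leq C''\rho^{-2}\leq C'd^{-2/3}$, which is \eqref{eq- cl3, 2}. Points with $d$ bounded are handled by Lemma \ref{lm-uniform curvature bound in class m} directly, with constant $K$ and $r_{0}$ on compact sets.

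The main obstacle is the precise bookkeeping in the Schauder step. The Hessian $\Hess\tilde f=\partial^{2}\tilde f-\Gamma\ast\partial\tilde f$ involves $\Gamma\sim\partial g$, which has only $\mathcal{C}^{\alpha}$ control, multiplied by $|\partial\tilde f|\sim\rho$. So the $\mathcal{C}^{\alpha}$ norm of $\Ric_{\tilde g}$ could a priori pick up a term of size $\rho$, not $\rho^{3}/d$. I would try first to use the tensorial identity $\nabla\scal=2\Ric(\nabla f)$ together with the contracted Bianchi/soliton identity $\Delta\Ric_{ij}=\nabla_{\nabla f}\Ric_{ij}-2R_{ikjl}R_{kl}$ (up to sign conventions) in the rescaled chart. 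This gives an elliptic equation for $\Ric$ whose drift coefficient is of size $\rho$ but whose zeroth-order term is $\Rm\ast\Ric$. Combining this with an interior $\mathcal{L}^{p}$ estimate yields $\|\Ric_{\tilde g}\|_{\mathcal{C}^{\alpha}}\lesssim\rho\cdot\rho^{2}/d$, so the drift costs exactly one factor of $\rho$, consistent with the $\rho^{3}/d$ count. If the Christoffel term still obstructs this, I would fall back on a contradiction/blow-up argument: take a sequence with $\rho_{i}^{2}|{\Rm}|(x_{i})\to\infty$ and pass to a limit via Corollary \ref{cl-GH limit of gradient Ricci soliton}, whose limit is Ricci-flat with $\mathcal{C}^{1,\alpha}$ control. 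This contradicts the normalization.
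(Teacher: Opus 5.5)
There is a genuine gap. Your overall plan matches the paper's: rescale by $\rho^{-2}$ with $\rho\sim d^{1/3}$, obtain an $\BigO(1)$ H\"older bound on $\Ric$ in the harmonic chart, then run Schauder on $\Delta g_{ij}=-2\Ric_{ij}+Q$. Your count $\rho^{3}/d$ is also the right one. But the step that carries all the content, namely $\|\Ric_{\tilde g}\|_{\mathcal{C}^{\alpha}}=\BigO(1)$, is never established.

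\emph{Your main line.} It cannot give this bound. Since $|\partial\tilde f|\sim\rho$, both $\Gamma\ast\partial\tilde f$ and the Taylor remainder $\|\tilde f-\ell\|_{\mathcal{L}^{\infty}}$ are of size $\rho$. Carried out honestly, Schauder therefore yields only $\|\Ric_{\tilde g}\|_{\mathcal{C}^{\alpha}}\lesssim\rho$, hence only $|\Rm|\lesssim d^{-1/3}$. The identity $\nabla\scal=2\Ric(\nabla f)$ controls only the trace, whereas $\nabla_{k}R_{ij}-\nabla_{i}R_{kj}$ is $\Rm\ast\nabla f$.

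\emph{Your first fix.} It is circular as stated. The zeroth-order term $\Rm\ast\Ric$ in $\Delta_{f}\Ric$ contains exactly the quantity you want to bound. In a chart with only $\mathcal{C}^{1,\alpha}$ control you have no bound on $\Rm$, nor on the $\partial\Gamma$ terms of the tensor Laplacian.

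\emph{Your blow-up fallback.} It fails for two reasons. First, Corollary \ref{cl-GH limit of gradient Ricci soliton} needs $|\nabla f_{i}|(p_{i})\leq A$, but $|\nabla f|_{\tilde g_{i}}=\rho_{i}|\nabla f|_{g}\to\infty$. Second, curvature does not pass to $\mathcal{C}^{1,\alpha}$ limits, so a Ricci-flat limit does not contradict $\rho_{i}^{2}|\Rm|(x_{i})\to\infty$. Upgrading the convergence would need exactly the missing H\"older control of $\Ric$.

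\emph{Minor point.} Take $\rho:=cC^{-1}d^{1/3}$ rather than $c\,r_{H}(x)$. The harmonic radius $r_{H}(x)$ may be much larger than $d^{1/3}$, and then $\rho^{3}/d$ is unbounded.

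\emph{The missing idea} is a two-scale argument that uses an a priori curvature bound. From the hypothesis away from $p$ and compactness near $p$, you have $\inf_{\M}r_{H}(\Lambda,1,\alpha)>0$. So Corollary \ref{cl-bounded curvature under uniform lower bound of r(1,alpha)} already gives a crude global bound $|\Rm|\leq K$. At unit $g$-scale the drift $|\nabla f|\leq1$ is harmless, so this bound lets the local Shi estimate give $|\nabla\Ric|\leq C'/d$. Equivalently, you can run your $\Delta_{f}\Ric=\Rm\ast\Ric$ estimate on $g$-balls of radius one, which are $\tilde g$-balls of radius $\rho^{-1}$.

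Now set $g_{R}=R^{-2/3}g$. Then $|\nabla\Ric_{g_{R}}|_{g_{R}}=R\,|\nabla\Ric|_{g}\leq C'$ and $|\Ric_{g_{R}}|_{g_{R}}\leq 2CR^{-1/3}$. The components $\Ric_{ij}$ are therefore uniformly Lipschitz in the rescaled harmonic chart, since the Christoffel symbols are bounded there. Schauder then closes the argument as you intended. With the global bound $|\Rm|\leq K$ added, your first fix becomes the paper's proof; without it, the proposal does not reach the rate $d^{-2/3}$.
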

\begin{rmk}
    The authors are unaware whether the decay rate in \eqref{eq- cl3, 2} is optimal or not. In fact, one can follow the same strategy to show that for a steady gradient Ricci soliton, if $\limsup_{r\rightarrow\infty}|{\Ric}|=0$ and $\liminf_{r\rightarrow\infty}r_{H}(\Lambda,1,\alpha)=\infty$, then $\limsup_{r\rightarrow\infty}|{\Rm}|=0$.
\end{rmk}
\begin{proof}[Proof of Corollary \ref{cl- scalar curvature with linear decayed}]
By Corollary \ref{cl-bounded curvature under uniform lower bound of r(1,alpha)}, $\Rm$ is bounded on $\M$. Hence, by the local Shi's estimate (see \cite[Remark 2.7]{D17}), we yield $|{\nabla\Ric}|\leq C'd_{g}(p,\cdot)^{-1}$ for some constant $C'>0$. For $q\in B_{g}(p,R)\setminus B_{g}(p,\frac{R}{2})$, we rescaled the metric by $g_{R}:=R^{-\frac{2}{3}}g$ on $B_{g}(q,C^{-1}(R/2)^{\frac{1}{3}})$. Since $r_{H}(\Lambda,1,\alpha)(q)\geq C^{-1}(R/2)^{\frac{1}{3}}$, this implies $r_{H,g_{R}}(\Lambda,1,\alpha)(q)\geq 1/C2^{\frac{1}{3}}$ and exists a harmonic coordinate chart $(B_{g_{R}}(q,1/(C2^{\frac{1}{3}})),x^{1},\cdots,x^{n})$ with $x^{i}(q)=0$ with $i=1,\cdots,n$ and $\mathcal{C}^{1,\alpha}_{H,g_{R}}(q,1/C2^{\frac{1}{3}})\leq\Lambda$. Since the harmonic coordinate guarantees
\begin{equation}
    \Delta g_{ij}=-2\Ric_{ij}+Q(g,\partial g)_{ij},
\end{equation}
for some quadratic term $Q$, and for the rescaled metric, we have 
\begin{equation}
    |{\Ric_{g_{R}}}|_{g_{R}}\leq \frac{2C}{R^{\frac{1}{3}}}\ll1\text{ and }|{\nabla\Ric}_{g_{R}}|_{g_{R}}\leq C',
\end{equation}
on $B_{g_{R}}\left(q,\frac{1}{C2^{\frac{1}{3}}}\right)$. Therefore, we have the uniform $\mathcal{C}^{\alpha}$ bound on R.H.S. on $B_{g_{R}}(q,\frac{1}{C2^{\frac{1}{3}}})$. Hence, by the elliptic Schauder estimate, we obtain a uniform upper bound of $\mathcal{C}^{2,\alpha}_{H,g_{R}}(q,\frac{1}{2C2^{\frac{1}{3}}})$ on this harmonic coordinate and $|{\Rm_{g_{R}}}|(q)\leq C''$ for some constant $C''(\Lambda,C,C',n)>0$, which implies $|{\Rm_{g}}|(q)\leq C''/R^{2/3}\leq2C''/d_{g}(p,q)^{2/3}$ for $d_{g}(p,q)\gg1$. This completes the proof.
\end{proof}
\begin{rmk}
The main technical difficulty in working with harmonic coordinates arises when applying local Schauder or $\mathcal{L}^{p}$ estimates to control the $\mathcal{C}^{k,\alpha}_{H}$ norm of $f$. Such estimates typically require an a priori $\mathcal{C}^{0}$ bound on $f$. However, this presents a difficulty in our setting, since the potential function $f$ is only defined up to an additive constant and is invariant under the natural scaling of the soliton. As a result, no uniform $\mathcal{L}^{\infty}$ bound for $f$ is available a priori.
\end{rmk}

\section{Proof of Theorem \ref{thm-bounded Rm with L1 condition}}\label{proof of main theorem}

As a byproduct of Lemma \ref{lm-uniform curvature bound in class m}, we now present the proof of Theorem \ref{thm-bounded Rm with L1 condition}.
\begin{proof}[Proof of Theorem \ref{thm-bounded Rm with L1 condition}]
    By Lemma \ref{lm-uniform curvature bound in class m}, it suffices to prove that 
    
    \noindent $(\M,g,f-f(p),p)\in\mathfrak{M}(n,K,1,r_{0},0,\alpha,\Lambda,C_{0})$ for some $K,r_{0},\alpha,\Lambda,C_{0}$ depending on $g,f$. Since $|{\Ric}|\rightarrow0$ as $r\rightarrow\infty$, $|{\nabla f}|^{2}+\scal=1$, and $f$ is proper, we may assume there is a $R_{0}$ such that $|{\nabla f}|>1/2$ on $\{F:=-f\geq R_{0}\}$. Consider the flow $\varphi_{t}$ such that $\frac{d}{dt}\varphi_{t}=\nabla F/|{\nabla F}|^{2}$ with $\varphi_{0}=\text{id}$ on $\{F\geq R_{0}\}$. Similar to the argument as in \cite[Section 5]{CH25}, by \ref{Ass-1}, on $\Sigma_{R_{0}}:=\{F=R_{0}\}$, 
    \begin{equation}
        \left|\frac{d}{dt}\varphi_{R_{0}-t}^{*}g|_{F=t}\right|=\left|\frac{2\Ric(g)|_{\{F=t\}}}{|{\nabla F}|^{2}}\right|\leq Ch\left(t-C\right),
    \end{equation}
    for $t\gg R_{0}$, where the last inequality follows from Lemma \ref{lm-linear equivalent of F} and $h$ is non-increasing. Since $h\in\mathcal{L}^{1}$ and is non-increasing when $t\gg1$, there is a $\mathcal{C}^{0}$-metric $g_{\infty}$ on $\Sigma_{R_{0}}$ such that $\varphi_{t-R_{0}}^{*}g|_{\Sigma_{t}}$ converges to $g_{\infty}$ uniformly as $t\rightarrow\infty$.

    \vskip0.2cm
    Now, on $\Sigma_{a,b}:=\{a<F<b\}\simeq\Sigma_{R_{0}}\times(a,b)$ with $a>R_{0}$, we denote the map $\Psi(x):=(\varphi_{F(x)-R_{0}}(x),F(x))$, which is the desire diffeomorphism between $\Sigma_{a,b}$ and $\Sigma_{R_{0}}\times(a,b)$. Then
    \begin{equation}\label{eq-C0 closedness of metric}
        \|(\Psi^{-1})^{*}g-(dt^{2}+g_{\infty})\|_{dt^{2}+g_{\infty}}\leq C\left(1-\frac{1}{|{\nabla F}|}\right)+\sup_{t\in(a,b)}\|\varphi_{t-R_{0}}^{*}g|_{\Sigma_{t}}-g_{\infty}\|_{g|_{\Sigma_{t}}}\leq C\int_{a-C}^{\infty}h(s)ds,
    \end{equation}
    as $a\rightarrow\infty$ by Lemma \ref{lm-linear equivalent of F}. Therefore, for all $p\in\Sigma_{R_{0}}$, we have $(\overline{\Sigma_{t,t+4}},\Psi^{*}g,\varphi_{t+2-R_{0}}(p))$ converges to $(\Sigma_{R_{0}}\times[0,4],dt^{2}+g_{\infty},(p,2))$ as $t\rightarrow\infty$ in the $\mathcal{C}^{0}$-sense. 

    \vskip0.2cm
    Next, fixed $\epsilon>0$. We aim to show that there is a uniform radius $r_{\epsilon}>0$ such that $\vol_{g}(x,r_{\epsilon})/\omega_{n}r^{n}_{\epsilon}\geq1-\epsilon$ for all $r(x)\gg1$. This can be achieved by the $\mathcal{C}^{0}$-convergence via the following argument: Take $\delta>0$ to be determined later, then there is a $R_{\delta}>0$ such that for $\ell,s>R_{\delta}$,
    \begin{equation}\label{eq-C) equivalence between different slices}
        (1-\delta)\Psi_{*}g|_{\Sigma_{s,s+4}}\leq \Psi_{*}g|_{\Sigma_{\ell,\ell+4}}\leq(1+\delta)\Psi_{*}g|_{\Sigma_{s,s+4}},
    \end{equation}
    by (\ref{eq-C0 closedness of metric}). Since $\Sigma_{R_{{0}}}$ is compact, for a fixed $\ell>R_{\delta}$, there is a constant $r_{\delta}>0$ such that 
    \begin{equation}
        \frac{\vol_{\Psi_{*}g|_{\Sigma_{\ell,\ell+4}}}\left((x,\ell+2),r\right)}{\omega_{n}r^{n}}\geq1-\delta,,
    \end{equation}
    for all $x\in\Sigma_{R_{0}}$ and $r\in(0,r_{\delta})$. By (\ref{eq-C) equivalence between different slices}), this implies for $s>R_{\delta}$, 
    \begin{equation}
        \frac{\vol_{\Psi_{*}g|_{\Sigma_{s,s+4}}}\left((x,s+2),r\right)}{\omega_{n}r^{n}}\geq\frac{\vol_{(1+\delta)^{-1}\Psi_{*}g|_{\Sigma_{\ell,\ell+4}}}\left((x,s+2),(1-\delta)^{\frac{1}{2}}r\right)}{\omega_{n}r^{n}}\geq(1-\delta)^{\frac{n}{2}+1}(1+\delta)^{-\frac{n}{2}},
    \end{equation}
    for all $r\in(0,r_{\delta})$ and $x\in\Sigma_{R_{0}}$. Choose $\delta$ small enough so that $(1-\delta)^{\frac{n}{2}+1}(1+\delta)^{-\frac{n}{2}}\geq1-\epsilon$, then we confirm for $r(x)>R_{\epsilon}$, the almost Euclidean volume ratio property $\vol_{g}(x,r_{\epsilon})/\omega_{n}r^{n}_{\epsilon}\geq1-\epsilon$ holds for all $r\in(0,r_{\epsilon})$. Since $\M\setminus\Psi^{-1}(\Sigma_{R_{\delta}}\times(R_{\delta},\infty))$ is compact, we may assume the almost volume ratio property holds for all $x\in\M$ and $r\in(0,r_{\epsilon})$ (possibly adjusting $r_{\epsilon}$). By \ref{Ass-1}, the Ricci curvature is bounded on $\M$. Hence, we yield a uniform lower bound for $r_{H}(\Lambda,1,\alpha)(x)$ for all $\Lambda>1$, $\alpha\in(0,1)$, and $x\in\M$. See \cite[Remark 3.3]{A90} or \cite[Theorem 2.1]{H19} for instance. By Lemma \ref{lm-uniform curvature bound in class m}, this implies that $\Rm$ is uniformly bounded on $\M$. Since $|{\Ric}|\rightarrow0$ as $r\rightarrow\infty$ and $r_{H}(\Lambda,1,\alpha)\geq r_{0}$ for some $r_{0}>0$, the limiting metric $dt^{2}+g_{\infty}$ is Ricci flat, hence $g_{\infty}$ is Ricci flat and smooth on $\Sigma_{R_{0}}$. The smooth convergence follows from Shi's estimate and the unique $\mathcal{C}^{0}$-convergence.

    \vskip0.2cm
    Define $H(t):=\int_{t}^{\infty}h(s)ds$. It remains to prove the convergence rate is of $\BigO(H(r))$ with any $\mathcal{C}^{k}$ norm. Since $\Rm$ is bounded, by local Shi's estimate \cite[Remark 2.7]{D17}, for any $k\in\mathbb{N}_{0}$, there is a constant $C_{k}>0$ such that
    \begin{equation}\label{eq-proof of thm1- nabla k Ric}
        |{\nabla^{k}\Ric}|\leq C_{k}h(r(x)),
    \end{equation}
    for all $x\in\M$. Also, the connection of $g|_{\Sigma_{R}}$ is $\nabla_{g_{R}}X=\nabla X-\left(\nabla X\cdot\frac{\nabla F}{|{\nabla F}|}\right)\frac{\nabla F}{|{\nabla F}|}$ for vector filed $X$ and $\nabla_{g|_{\Sigma_{R}}}\omega=\nabla\omega+\omega(\frac{\nabla F}{|{\nabla F}|})\nabla\cdot\frac{\nabla F}{|{\nabla F}|}$ for any 1-form $\omega$. Now, we define the pullback metric  $g_{t}:=\varphi^{*}_{t-R_{0}}g|_{\Sigma_{t}}$ on $\Sigma_{R_{0}}$ for $t\gg R_{0}$. Then for any tensor $T$ on $\Sigma_{R_{0}}$,
    \begin{equation}\label{eq-proof of thm1 - difference of connection}
        \nabla_{g_{t_{2}}}T-\nabla_{g_{t_{1}}}T=\int_{t_{1}}^{t_{2}}\nabla_{g_{\tau}}\left.\frac{\Ric(\varphi^{*}_{\tau-R_{0}}g)}{|{\nabla f}|^{2}\circ\varphi_{\tau-R_{0}}}\right|_{\Sigma_{R_{0}}}\ast Td\tau = \BigO(H(t_{1}))|T|_{g_{\infty}},
    \end{equation}
    for all $t_{1}\leq t_{2}\leq \infty$. Take $T=g_{t_{1}}$, then we obtain
    \begin{equation}\label{eq-proof of thm 1 - k=1}
        \nabla_{g_{t_{2}}}g_{t_{1}}=\BigO(H(\min\{t_{1},t_{2}\})),
    \end{equation}
    for all $t_{1},t_{2}\gg1$. We claim that $\nabla_{g_{t_{1}}}^{j}\nabla^{k-j}_{g_{t_{2}}}g_{t_{1}}=\BigO(H(\min\{t_{1},t_{2}\}))$ for all $0\leq j\leq k$ and prove it by induction on $k$. $k=1$ follows from \eqref{eq-proof of thm 1 - k=1}. For $k>1$, substitute $T=\nabla_{g_{t_{2}}}^{k-1}g_{t_{1}}$ into \eqref{eq-proof of thm1 - difference of connection}, 
    \begin{equation}
        \nabla^{k}_{g_{t_{2}}}g_{t_{1}}=\int_{t_{1}}^{t_{2}}\nabla_{g_{\tau}}\left.\frac{\Ric(\varphi^{*}_{\tau-R_{0}}g)}{|{\nabla f}|^{2}\circ\varphi_{\tau-R_{0}}}\right|_{\Sigma_{R_{0}}}\ast \nabla_{g_{t_{2}}}^{k-1}g_{t_{1}}d\tau +\nabla_{g_{t_{1}}}\nabla_{g_{t_{2}}}^{k-1}g_{t_{1}}=\BigO(H(t))+\nabla_{g_{t_{1}}}\nabla_{g_{t_{2}}}^{k-1}g_{t_{1}},
    \end{equation}
    by the induction hypothesis. Now, by \eqref{eq-proof of thm1 - difference of connection} again, 
    \begin{equation}
        \begin{aligned}
            \nabla_{g_{t_{1}}}\nabla_{g_{t_{2}}}^{k-1}g_{t_{1}}-\nabla_{g_{t_{1}}}^{2}\nabla_{g_{t_{2}}}^{k-2}g_{t_{1}}=&~\nabla_{g_{t_{1}}}\int_{t_{1}}^{t_{2}}\nabla_{g_{\tau}}\left.\frac{\Ric(\varphi^{*}_{\tau-R_{0}}g)}{|{\nabla f}|^{2}\circ\varphi_{\tau-R_{0}}}\right|_{\Sigma_{R_{0}}}\ast \nabla_{g_{t_{2}}}^{k-2}g_{t_{1}}d\tau \\
            =&~\int_{t_{1}}^{t_{2}}\nabla_{g_{t_{1}}}\nabla_{g_{\tau}}\left.\frac{\Ric(\varphi^{*}_{\tau-R_{0}}g)}{|{\nabla f}|^{2}\circ\varphi_{\tau-R_{0}}}\right|_{\Sigma_{R_{0}}}\ast \nabla_{g_{t_{2}}}^{k-2}g_{t_{1}}d\tau \\
            &~+\int_{t_{1}}^{t_{2}}\nabla_{g_{\tau}}\left.\frac{\Ric(\varphi^{*}_{\tau-R_{0}}g)}{|{\nabla f}|^{2}\circ\varphi_{\tau-R_{0}}}\right|_{\Sigma_{R_{0}}}\ast \nabla_{g_{t_{1}}}\nabla_{g_{t_{2}}}^{k-2}g_{t_{1}}d\tau\\
            =:&~A+B.
        \end{aligned}
    \end{equation}
    For $B$, substitute $T=\nabla^{k-2}_{g_{t_{2}}}g_{t_{1}}$ into \eqref{eq-proof of thm1 - difference of connection}, we get $\nabla_{g_{t_{1}}}\nabla^{k-2}_{g_{t_{2}}}g_{t_{1}}=\BigO(H(\min\{t_{1},t_{2}\}))$ by induction hypothesis. Therefore, $B=\BigO(H(\min\{t_{1},t_{2}\}))$ for $t_{1},t_{2}\gg1$. For $A$, substitute $t_{2}=\tau$ and $T=\nabla_{g_{\tau}}\left.\frac{\Ric(\varphi^{*}_{\tau-R_{0}}g)}{|{\nabla f}|^{2}\circ\varphi_{\tau-R_{0}}}\right|_{\Sigma_{R_{0}}}$ into \eqref{eq-proof of thm1 - difference of connection}, 
    \begin{equation}\label{eq-proof of Theorem 1- nabla nabla Ric}
    \begin{aligned}
        \nabla_{g_{t_{1}}}\nabla_{g_{\tau}}\left.\frac{\Ric(\varphi^{*}_{\tau-R_{0}}g)}{|{\nabla f}|^{2}\circ\varphi_{\tau-R_{0}}}\right|_{\Sigma_{R_{0}}}=&~\nabla_{g_{\tau}}\nabla_{g_{\tau}}\left.\frac{\Ric(\varphi^{*}_{\tau-R_{0}}g)}{|{\nabla f}|^{2}\circ\varphi_{\tau-R_{0}}}\right|_{\Sigma_{R_{0}}}\\
        &~+\int_{\tau}^{t_{1}}\nabla_{g_{s}}\left.\frac{\Ric(\varphi^{*}_{s-R_{0}}g)}{|{\nabla f}|^{2}\circ\varphi_{s-R_{0}}}\right|_{\Sigma_{R_{0}}}\ast\nabla_{g_{\tau}}\left.\frac{\Ric(\varphi^{*}_{\tau-R_{0}}g)}{|{\nabla f}|^{2}\circ\varphi_{\tau-R_{0}}}\right|_{\Sigma_{R_{0}}}ds\\
        =&~\BigO(h(\tau)),
    \end{aligned}
    \end{equation}
    where we use \eqref{eq-proof of thm1- nabla k Ric} to estimate first term. This implies $A$ is also of $\BigO(H(\min\{t_{1},t_{2}\})$ by induction hypothesis. Continue this process, and one can show that
    \begin{equation}\label{eq-proof of thm2- nabla j nabla k-j}
        \nabla^{j}_{g_{t_{1}}}\nabla_{g_{t_{2}}}^{k-j}g_{t_{1}}=\sum_{\ell=0}^{k-j-2}\nabla_{g_{t_{1}}}^{j+\ell}\int_{\tau}^{t_{1}}\nabla_{g_{s}}\left.\frac{\Ric(\varphi^{*}_{s-R_{0}}g)}{|{\nabla f}|^{2}\circ\varphi_{s-R_{0}}}\right|_{\Sigma_{R_{0}}}\ast\nabla_{g_{t_{2}}}^{k-j-1-\ell}g_{t_{1}}ds,
    \end{equation}
    and
    
    \begin{equation}\label{eq-proof of thm2- nabla k Ric}
        \begin{aligned}
            &~\nabla_{g_{t_{1}}}^{m}\nabla_{g_{\tau}}\left.\frac{\Ric(\varphi^{*}_{\tau-R_{0}}g)}{|{\nabla f}|^{2}\circ\varphi_{\tau-R_{0}}}\right|_{\Sigma_{R_{0}}}\\
            =&~\sum_{\ell=0}^{m-1}\nabla_{g_{t_{1}}}^{m-1-\ell}\int_{\tau}^{t_{1}}\nabla_{g_{s}}\left.\frac{\Ric(\varphi^{*}_{s-R_{0}}g)}{|{\nabla f}|^{2}\circ\varphi_{s-R_{0}}}\right|_{\Sigma_{R_{0}}}\ast\nabla_{g_{\tau}}^{\ell+1}\left.\frac{\Ric(\varphi^{*}_{\tau-R_{0}}g)}{|{\nabla f}|^{2}\circ\varphi_{\tau-R_{0}}}\right|_{\Sigma_{R_{0}}}ds,
        \end{aligned}
    \end{equation}
    for all $m\geq0$.
    Applying the inductive argument on \eqref{eq-proof of thm2- nabla k Ric}, by \eqref{eq-proof of thm1- nabla k Ric}, one can show that 
    \begin{equation}
        \nabla_{g_{t_{1}}}^{m}\nabla_{g_{\tau}}\left.\frac{\Ric(\varphi^{*}_{\tau-R_{0}}g)}{|{\nabla f}|^{2}\circ\varphi_{\tau-R_{0}}}\right|_{\Sigma_{R_{0}}}=\BigO(h(\tau)),
    \end{equation}
    with the same argument as \eqref{eq-proof of Theorem 1- nabla nabla Ric}. Therefore, we conclude that $\nabla^{j}_{g_{t_{1}}}\nabla_{g_{t_{2}}}^{k-j}g_{t_{1}}nabla^{k}_{g_{t_{2}}}g_{t_{1}}=\BigO(H(\min\{t_{1},t_{2}\}))$ by \eqref{eq-proof of thm2- nabla j nabla k-j}. Choose $t_{2}=\infty$, this implies
    \begin{equation}
        |g_{t_{1}}-g_{\infty}|_{\mathcal{C}^{k}_{g_{\infty}}}=\BigO(H(t_{1})),
    \end{equation}
    as $t_{1}\gg1$. On the other hands,
     since 
     \begin{equation}
         \nabla_{\overline{g_{\infty}}}\Psi_{*}g=\Psi_{*}\left[\left(\Hess F\right)\ast|{\nabla F}|^{-3}\right] +\frac{2}{|{\nabla F}|^{2}\circ\Psi^{-1}}\Psi_{*}\left(\Ric(g)|_{g_{\Sigma}}\right)+\nabla_{g_{\infty}}g_{t}=\BigO(H(t)),
     \end{equation}
     on $\R^{+}\times\N$, one can repeat the estimate as above to show that for any $k\in\mathbb{N}_{0}$,
     \begin{equation}
         |\nabla_{\overline{g_{\infty}}}^{k}((\Psi^{-1})^{*}g-\overline{g_{\infty}}))|_{\overline{g_{\infty}}}=|\nabla_{\overline{g_{\infty}}}^{k}(\Psi^{-1})^{*}g|_{\overline{g_{\infty}}}=\BigO(H(t)),
     \end{equation}
     as $t\rightarrow\infty$ by \eqref{eq-proof of thm1- nabla k Ric}. This completes the proof.
\end{proof}
\begin{proof}[Proof of Corollary \ref{cl-curvature estimate with h(r)=exp(-r)}]
    This is a direct consequence of Theorem \ref{thm-bounded Rm with L1 condition}.
\end{proof}
\begin{proof}[Proof of Corollary \ref{cl-curvature estimate with Dini H}]
    By Theorem \ref{thm-bounded Rm with L1 condition} and the assumption that $\lim_{r\rightarrow\infty}|{\Rm}|=0$, there exists a \textit{closed flat manifold $(\N^{n-1},g_{\infty})$} such that
    \begin{equation}
        \|(\Psi^{-1})^{*}g-\overline{g_{\infty}}\|_{\mathcal{C}^{k}(\overline{g_{\infty}})}\leq C_{k}\int_{r(x)}^{\infty}h(s)ds=:C_{k}\frac{H(r(x))}{r(x)},
    \end{equation}
    when $r(x)\gg1$. Since $\overline{g_{\infty}}$ is a flat manifold, this shows that $|{\Rm}|=\BigO(H(r)/r)$ as $r\rightarrow\infty$ by Lemma \ref{lm-linear equivalent of F}. Therefore, 
    \begin{equation}
        \lim_{r\rightarrow\infty}r|{\Rm}|\leq\lim_{r\rightarrow\infty}H(r)=0,
    \end{equation}
    by the assumption of $H$. Hence, we get $|{\Rm}|\leq Ce^{-r(x)}$ for some constant $C>0$ by \cite[Theorem 2]{C19}. The assertion follows from applying Corollary \ref{cl-curvature estimate with h(r)=exp(-r)}.
\end{proof}


\begin{thebibliography}{100}
\bibitem[A90]{A90}M.T. Anderson. \textit{Convergence and rigidity of manifolds under Ricci curvature bounds}. Invent Math \textbf{102}, 429–445 (1990). 

\bibitem[Ba20a]{Ba20a}R. Bamler, \textit{Entropy and heat kernel bounds on a Ricci flow background}, 
arXiv:2009.11754.

\bibitem[Ba20b]{Ba20b}R. Bamler, \textit{Structure theory of non-collapsed limits of Ricci flows}, arXiv preprint, 2020.


\bibitem[Ba23]{Ba23}R. Bamler, \textit{Compactness theory of the space of Super Ricci flows}, 
Invent. Math. \textbf{233} (2023), 1121–1277.

\bibitem[B20]{B20}S. Brendle, \textit{Ancient solutions to the Ricci flow in dimension 3}, 
Acta Math. \textbf{225} (2020), no. 1, 1–102.

\bibitem[CH24]{CH24}X. Cao and H. Tran, \textit{Geometry and analysis of gradient Ricci solitons in dimension four}, 
in: Surveys in Differential Geometry, Vol. 27, no. 1, 2022, pp. 213–233.

\bibitem[C19]{C19}P.-Y. Chan, \textit{Curvature estimates for steady Ricci solitons
}, Trans. Am. Math. Soc., \textbf{371}, no. 12, 8985-9008 (2019).

\bibitem[CH25]{CH25}P.-Y. Chan and M. Hsiao, \textit{Curvature estimates for steady and expanding solitons in higher dimensions}, arXiv:2512.05625 (2025).

\bibitem[CMZ22]{CMZ22}
P.-Y. Chan, Z. Ma and Y. Zhang, \textit{Volume growth estimates of gradient Ricci solitons}, 
J. Geom. Anal. \textbf{32} (2022), Paper No. 291, 36 pp.

\bibitem[CN15]{CN15}J. Cheeger and A. Naber, \textit{Regularity of Einstein manifolds and the codimension 4 conjecture}, 
Ann. of Math. (2) \textbf{182} (2015), no. 3, 1093–1165.

\bibitem[C09]{C09}
B.-L. Chen, \textit{Strong uniqueness of the Ricci flow}, 
J. Differential Geom. \textbf{82} (2009), 363–382.

\bibitem[C20]{C20}C.-W. Chen. \textit{Shi-type estimates of the Ricci flow based on Ricci curvature}. Ann. Sc. Norm. Super. Pisa Cl. Sci. \textbf{20}, 1553–1580 (2020).

\bibitem[C97]{C97}T. H. Colding, \textit{Ricci curvature and volume convergence}, 
Ann. of Math. (2) \textbf{145} (1997), no. 3, 477–501.

\bibitem[D12]{D12}A. Deruelle. \textit{Steady gradient Ricci soliton with curvature in L1}. Commun. Anal. Geom. \textbf{20}, no. 1, 31-53 (2012).

\bibitem[D17]{D17}
A. Deruelle, \textit{Asymptotic estimates and compactness of expanding gradient Ricci solitons}, 
Ann. Sc. Norm. Super. Pisa Cl. Sci. (5) \textbf{17} (2017), 485–530.

\bibitem[H82]{H82}R. S. Hamilton, \textit{Three-manifolds with positive Ricci curvature}, 
J. Differential Geom. \textbf{17} (1982), no. 2, 255–306.

\bibitem[H95]{H95}
R. S. Hamilton, \textit{The formation of singularities in the Ricci flow}, 
Surveys in Differential Geometry (Cambridge, MA, 1993), \textbf{2}, 7-136, International Press, Cambridge, MA, 1995.

\bibitem[HM11]{HM11}R. Haslhofer and R. Müller, \textit{A compactness theorem for complete Ricci shrinkers}, 
Geom. Funct. Anal. \textbf{21} (2011), 1091–1116.


\bibitem[H19]{H19}
S. Huang, \textit{A note on existence of exhaustion functions and its applications}, 
J. Geom. Anal. \textbf{29} (2019), no. 2, 1649-1659.

\bibitem[LW24]{LW24}Y. Li and B. Wang, \textit{Heat kernel on Ricci shrinkers (II)}, 
Acta Math. Sci. \textbf{44} (2024), 1639–1695.

\bibitem[MW11]{MW11}O. Munteanu and J. Wang, \textit{Smooth metric measure spaces with non-negative curvature}, 
Commun. Anal. Geom. \textbf{19} (2011), no. 3, 451–486.

\bibitem[MSW19]{MSW19}O. Munteanu, C.-J. A. Sung, and J. Wang, \textit{Poisson equation on complete manifolds}, Adv. Math. \textbf{348} (2019), 81–145.

\bibitem[P02]{P02}G. Perelman, \textit{The entropy formula for the Ricci flow and its geometric applications}, arXiv preprint, 2002.

\bibitem[W25]{W25}L. Wu, \textit{Curvature estimates for gradient steady Ricci solitons}, 
Period. Math. Hungar. \textbf{92} (2026), 46–56.

\bibitem[Z06]{Z06}X. Zhang, \textit{Compactness theorems for gradient Ricci solitons}, 
J. Geom. Phys. \textbf{56} (2006), no. 12, 2481–2499.
\end{thebibliography}
\end{document}